\def\arxiv#1{\href{http://arxiv.org/abs/#1}{\texttt{arXiv:#1}}}
\let\phi\varphi
\def\Z{\mathbb Z}
\def\R{\mathbb R}
\def\C{\mathbb C}
\def\H{\mathbb H}
\def\O{\mathbb O}
\def\A{\mathbb A}
\def\m{\mathfrak m}
\def\Rge{\R_{+}}
\def\Sge{S_{+}}
\def\Xge{X_{+}}
\def\Yge{Y_{+}}
\def\Zge{Z_{+}}
\def\bX{\bar X}
\def\bY{\bar Y}
\def\bZ{\bar Z}
\def\pair#1{\langle#1\rangle}
\def\bigpair#1{\bigl\langle#1\bigr\rangle}
\def\Re{\operatorname{Re}}
\DeclareMathOperator{\Span}{span}
\def\ceil#1{\lceil#1\rceil}
\def\bfone{\boldsymbol 1}
\def\bflambda{\boldsymbol\lambda}
\def\ZZ{\boldsymbol Z}
\def\zz{\boldsymbol z}
\def\uu{\boldsymbol u}
\def\aa{\boldsymbol a}
\def\FF{\boldsymbol F}
\def\GG{\boldsymbol G}
\def\HT{H_{T}}
\def\cf{\emph{cf.}}
\theoremstyle{plain}
\newtheorem{theorem}{Theorem}[section]
\newtheorem{proposition}[theorem]{Proposition}
\newtheorem{lemma}[theorem]{Lemma}
\theoremstyle{definition}
\newtheorem{remark}[theorem]{Remark}
\theoremstyle{remark}
\numberwithin{equation}{section}
\begin{document}

\title{Mutants of compactified representations revisited}

\author[M.~Franz]{Matthias Franz}
\address{Department of Mathematics, University of Western Ontario, London, Ont.\ N6A\;5B7, Canada}
\email{mfranz@uwo.ca}

\author[S.~López de Medrano]{Santiago López de Medrano}
\address{Instituto de Matemáticas, Universidad Nacional Autónoma de México, 04510 México, Mexico}
\email{santiago@matem.unam.mx}

\author[J.~Malik]{John Malik}
\address{Department of Mathematics, University of Toronto, 215~Huron Street, Room~HU936, Toronto, Ont.\ M5S\;1A2, Canada}
\email{john.malik@mail.utoronto.ca}

\dedicatory{To the memory of Sam Gitler}
\thanks{M.\,F.\ was supported by an NSERC Discovery Grant
  and J.\,M.\ by an NSERC USRA}

\subjclass[2010]{Primary 57S25; secondary 14P25, 17A35, 57R91}
% 14P25  Topology of real algebraic varieties
% 17A35  Division algebras, in: 17A  General nonassociative rings
% 17C60  Division algebras, in: 17C  Jordan algebras (algebras, triples and pairs) -- only one entry in MSC !
% 57R19  Algebraic topology on manifolds
% 57R19  Equivariant algebraic topology on manifolds
% 57R65  Surgery and handlebodies
% 57S25  Groups acting on specific manifolds

\begin{abstract}
  We show that the mutants of compactified representations
  constructed by Franz and Puppe can be written
  as intersections of real quadrics involving division algebras
  and as generalizations of polygon spaces.
  We also show that these manifolds are
  connected sums of products of spheres.
\end{abstract}

% aspell:off
\hypersetup{pdfauthor=\authors}
% aspell:on

\maketitle

\section{Introduction}

Let \(n\in\{1,2,4,8\}\),
and let \(T=(S^{1})^{n+1}\) be a torus. % of rank~\(n+1\).
In~\cite[Sec.~4]{FranzPuppe:2008},
Franz and Puppe defined a ``mutant of a compactified representation''
based on the Hopf fibration~\(S^{2n-1}\to S^{n}\).
We recall the construction in Section~\ref{sec:homeo-Z}.
The mutant is a \((3n+1)\)-dimensional compact orientable \(T\)-manifold~\(Z\)
whose equivariant cohomology
\begin{equation}
  \label{eq:HTZ}
  \HT^{*}(Z;\R) \cong
  \begin{cases}
    A \oplus A[2] \oplus A[2] \oplus A[4] & \text{if \(n=1\),} \\
    A \oplus \m[n-1] \oplus A[2n+2] \oplus A[3n+1] & \text{if \(n>1\)}
  \end{cases}
\end{equation}
is torsion-free over the polynomial ring~\(A=H^{*}(BT;\R)\), but except for~\(n=1\) not free
\cite[Sec.~5]{FranzPuppe:2008}.
(Above, \(\m\lhd A\) denotes the maximal homogeneous ideal
and numbers in square brackets degree shifts.)
This is a rare phenomenon among compact orientable \(T\)-manifolds,
and in fact the mutants were the first examples of this kind,
compare~\cite{Franz:maximal}.
The aim of this note is to show that the mutants
are equivariantly homeomorphic to certain intersections of real quadrics
and to deduce some topological consequences.
In order to define these quadrics, we need to introduce some notation.

Let \(\A\) be a normed real division algebra of dimension~\(n\), hence
isomorphic to either~\(\R\),~\(\C\),~\(\H\) or~\(\O\).
We write \(|{-}|\) for the norm of~\(\A\) and \(\pair{-,-}\) for the associated inner product.
Moreover, let \(\lambda_{0}\),~\dots,~\(\lambda_{n}\) be unit vectors in~\(\A\)
which are symmetric about the origin in the sense that
\begin{equation}
  \label{eq:cond-lambda}
  \pair{\lambda_{k},\lambda_{l}} = \begin{cases}
    \phantom{-}1 & \text{if \(k=l\),} \\
    -\frac{1}{n} & \text{otherwise.}
  \end{cases}
\end{equation}
Such elements exist and are essentially unique, see Remark~\ref{rem:contruct-lambda}.

Now let \(Y\) be the set of solutions of the equations
\begin{align}
  \label{eq:def-Y-1}
  \frac{n}{2}\sum_{k=0}^{n}|Z_{k}|^{2}\lambda_{k} + V W &= 0, \\
  \label{eq:def-Y-2}
  \sum_{k=0}^{n}|Z_{k}|^{2} + |V|^{2} + |W|^{2} &= 1,
\end{align}
where \(\ZZ=(Z_{0},\dots,Z_{n})\in\C^{n+1}\) and \(V\),~\(W\in\A\).
Condition~\eqref{eq:def-Y-1} defines a real algebraic variety in~\(\C^{n+1}\times\A^{2}\)
with an isolated singularity at the origin whose link is \(Y\),
see Proposition~\ref{thm:Y-mf}.
For~\(\A=\C\), the manifold~\(Y\) is among the intersections of real quadrics
studied by Gómez Gutiérrez and López de Medrano in~\cite{GomezLopezDeMedrano:2014}.
The action of~\(T\) on~\(Y\) is induced by the canonical action on the variables~\(Z_{k}\),
\begin{equation}
  (g_{0},\dots,g_{n})\cdot(Z_{0},\dots,Z_{n},V,W) = (g_{0}Z_{0},\dots,g_{n}Z_{n},V,W).
\end{equation}

In the next definition we write
\(\bflambda=(\lambda_{0},\dots,\lambda_{n})\), \(\bfone=(1,\dots,1)\in\A^{n+1}\)
and
\begin{equation}
  \Span(\bflambda,\bfone) = \bigl\{\, V\bflambda + W\,\bfone \bigm| V,W\in\A \,\bigr\}\subset\A^{n+1},
\end{equation}
where the multiplication~\(\A\times\A^{n+1}\to\A^{n+1}\) is defined componentwise.
The ``generalized polygon space''~\(X\) is defined by
\begin{align}
  \label{eq:def-X-1}
  |z_{k}|^{2} + |u_{k}|^{2} &= 1 \qquad\text{for each \(0\le k\le n\)}, \\
  \label{eq:u-in-span-vw}
  \uu &\in \Span(\bflambda,\bfone),
\end{align}
where \(\zz=(z_{0},\dots,z_{n})\in\C^{n+1}\) and \(\uu=(u_{0},\dots,u_{n})\in\A^{n+1}\).
Thus, \(X\) is the intersection of
a product of \(n+1\)~spheres of dimension~\(n+1\)
with a real vector subspace of~\(\C^{n+1}\times\A^{n+1}\)
of dimension~\(4n+2\).
We will see as part of Theorem~\ref{thm:diffeo-X-Y} that \(X\) is also a manifold.
The \(T\)-action on it is defined as for~\(Y\).

Condition~\eqref{eq:u-in-span-vw} is void for~\(\A=\R\),
so that \(X=S^{2}\times S^{2}\)
with the componentwise rotation action of~\(T=S^{1}\times S^{1}\).
For~\(\A=\C\) it means that \(\uu\)
lies on a complex hyperplane of~\(\C^{3}\), and it is not difficult
to verify that in this case
\(X\) is a big polygon space
as introduced in~\cite{Franz:maximal},
see Remark~\ref{rem:lambda-C} below.

\smallskip

We can now state our main results,
see Theorems~\ref{thm:diffeo-X-Y},~\ref{thm:homeo-Z-Y} and~\ref{thm:connected-sums}:

\begin{theorem} \( \)
  \begin{enumerate}
  \item The \(T\)-manifolds~\(X\) and~\(Y\) are equivariantly diffeomorphic.
  \item The mutant~\(Z\) is \(T\)-equivariantly homeomorphic to~\(X\) and~\(Y\).
  \end{enumerate}
\end{theorem}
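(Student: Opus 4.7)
The plan for part~(1) is to write down an explicit \(T\)-equivariant diffeomorphism between \(X\) and \(Y\) that exploits the tight frame structure of \(\lambda_{0},\dots,\lambda_{n}\). Given \(\uu=V\bflambda+W\bfone\in\Span(\bflambda,\bfone)\), the standard identity \(\pair{V\lambda_{k},W}=\pair{\lambda_{k},\bar V W}\) (the left-multiplication adjoint formula, valid in every normed real division algebra) expands
\(|u_{k}|^{2}=|V|^{2}+2\pair{\lambda_{k},\bar V W}+|W|^{2}\).
Summing the rows of the Gram matrix~\eqref{eq:cond-lambda} forces \(\sum_{k}\lambda_{k}=0\), so
\begin{equation*}
  \sum_{k}|u_{k}|^{2}=(n+1)(|V|^{2}+|W|^{2}).
\end{equation*}
The same Gram matrix equals \(\tfrac{n+1}{n}I-\tfrac{1}{n}J\) and has eigenvalues \(\{0,\tfrac{n+1}{n}\}\), so \(\{\lambda_{k}\}\) is a tight frame and \(\sum_{k}\pair{v,\lambda_{k}}\lambda_{k}=\tfrac{n+1}{n}v\) for all \(v\in\A\). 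Combined with~\eqref{eq:def-X-1}, this yields \(\sum_{k}|z_{k}|^{2}\lambda_{k}=-\tfrac{2(n+1)}{n}\bar V W\).

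These identities suggest the map \(\Phi\colon X\to Y\), \((\zz,\uu)\mapsto(\zz/\sqrt{n+1},\bar V,W)\), where \(\uu=V\bflambda+W\bfone\) is the unique decomposition. One then checks that the image satisfies both~\eqref{eq:def-Y-1} and~\eqref{eq:def-Y-2}, and \(T\)-equivariance is automatic since the action is trivial on the last two slots. For the candidate inverse \((\ZZ,V,W)\mapsto(\sqrt{n+1}\,\ZZ,\bar V\bflambda+W\bfone)\), the crucial step is that \(\sum_{k}\lambda_{k}=0\) is the only linear relation among the \(\lambda_{k}\), so~\eqref{eq:def-Y-1} and~\eqref{eq:def-Y-2} uniquely determine
\(|Z_{k}|^{2}=\tfrac{1}{n+1}\bigl(1-|V|^{2}-|W|^{2}-2\pair{\lambda_{k},\bar V W}\bigr)\) for every \(k\); this is exactly what is needed to recover~\eqref{eq:def-X-1}. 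Both maps are restrictions of smooth maps between the ambient vector spaces, so \(\Phi\) is a \(T\)-equivariant diffeomorphism, and \(X\) inherits its manifold structure from \(Y\) (Proposition~\ref{thm:Y-mf}).

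For part~(2), I would use the Hopf-fibration construction of the mutant \(Z\) recalled in Section~\ref{sec:homeo-Z} and interpret the auxiliary variables \((V,W)\in\A^{2}\) in \(Y\) as the moduli of the fibration: equation~\eqref{eq:def-Y-1} expresses \(-VW\) as a nonnegatively weighted combination of \(\lambda_{0},\dots,\lambda_{n}\), which mirrors the parametrization of \(S^{n}\subset\A\times\R\) used by Franz and Puppe, while~\eqref{eq:def-Y-2} supplies the matching spherical normalization. The plan is to stratify \(Y\) by \((V,W)\), show that over the open locus \(VW\neq0\) the projection to \((V,W)\) realizes \(Y\) as a trivial \(S^{n}\)-bundle, and then match the collapsed loci \(V=0\) and \(W=0\) with the attaching data of \(Z\). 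The main obstacle lies precisely here: the identifications used to assemble \(Z\) are continuous but not a priori smooth---which is why the claim is only a \(T\)-equivariant homeomorphism, not a diffeomorphism---so one has to produce explicit continuous inverses on each stratum and verify that they glue into a global, \(T\)-equivariant identification with~\(Y\).
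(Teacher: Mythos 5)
Your proof of part~(1) is essentially the paper's proof of Theorem~\ref{thm:diffeo-X-Y}, written in the opposite direction: the paper defines $(\ZZ,V,W)\mapsto(\sqrt{n+1}\,\ZZ,\bar V\bflambda+W\,\bfone)$ and shows, using the braid law~\eqref{eq:a-b-mu} together with Lemmas~\ref{thm:lambda-lin} and~\ref{thm:lambda-proj}, that the defining equations of~$X$ restricted to~$\Span(\bflambda,\bfone)$ are an invertible matrix times those of~$Y$, giving the diffeomorphism and the manifold structure on~$X$ at once. Your tight-frame computation, the expansion of $|u_{k}|^{2}$ and the injectivity of $c\mapsto\bigl(\sum_{k}c_{k}\lambda_{k},\sum_{k}c_{k}\bigr)$ are precisely these ingredients. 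One small slip: in the $Y$-variables, \eqref{eq:def-Y-1} and~\eqref{eq:def-Y-2} resolve to $|Z_{k}|^{2}=\tfrac{1}{n+1}\bigl(1-|V|^{2}-|W|^{2}-2\pair{\lambda_{k},VW}\bigr)$ with $VW$, not $\bar V W$; the conjugation appears only on the $X$ side when $\uu$ is written as $V\bflambda+W\,\bfone$.

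Part~(2) has a genuine gap, and the central step of your sketch is incorrect as stated. The fibre of the projection $Y\to\A^{2}$, $(\ZZ,V,W)\mapsto(V,W)$, over a point with $|V|^{2}+|W|^{2}<1$ is a torus, not an $S^{n}$: the two equations fix every $|Z_{k}|^{2}$, so each nonzero $Z_{k}$ still runs over a circle of phases. Stratifying by $VW\ne0$ therefore does not exhibit the Hopf fibration, and the claim of a trivial $S^{n}$-bundle over that locus would not survive scrutiny. In the paper the Hopf fibration lives in the quotient~$Y/T$, not in the fibres over~$(V,W)$: both $Y$ and $Z$ are presented in the form~\eqref{eq:T-ident-space} with a $2n$-ball fundamental domain ($\Yge$ for~$Y$, the ball~$Q=\Zge$ with Hopf boundary identification for~$Z$), and Theorem~\ref{thm:homeo-Z-Y} reduces to a homeomorphism of fundamental domains respecting the orbit-type partitions. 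The paper builds it as $\psi\circ\phi$, where $\phi(\ZZ,V,W)=(\ZZ,VW,|V|,|W|)$ and $\psi$ is an explicit normalization (Proposition~\ref{thm:p-homeo}); Lemma~\ref{thm:Q-Hopf} invokes Baez's description of $(V,W)\mapsto(VW,|V|,|W|)$ on the unit sphere of~$\A^{2}$ (adapted to the $\|{-}\|_{\infty}$ sphere bounding~$\Yge$) to identify $\phi$ on~$\partial\Yge$ with $S^{2n-1}\to S^{n}$, and Lemma~\ref{thm:max-VW}, $|VW|<\tfrac12$ on~$Y$, makes $\psi$ well-defined. Your instinct to read $(V,W)$ as the Hopf data is sound, but the passage to the quotient, the maps $\phi$ and~$\psi$, the compatibility with the orbit-type partitions, and the elementary Lemma~\ref{thm:pq-xy} used to invert~$\psi$ are all missing, and a stratification by~$VW$ does not lead to them.
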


For~\(\A=\C\) both parts were established in~\cite[Sec.~7]{Franz:maximal}.
Our new contribution lies in a uniform treatment of all cases
including quaternions and octonions.

\begin{theorem}
  As manifolds, \(X\) and~\(Y\) are diffeomorphic
  to connected sums of products of two spheres.
\end{theorem}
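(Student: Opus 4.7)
The plan is to reduce to~$Y$ via the equivariant diffeomorphism $X\cong Y$ from Theorem~\ref{thm:diffeo-X-Y}, and then to apply the classical technology for intersections of real quadrics, in the spirit of López de Medrano's work and its complex-variable extension in~\cite{GomezLopezDeMedrano:2014}.

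First I would expand the $\A$-valued constraint~\eqref{eq:def-Y-1} into $n$ real homogeneous quadratic equations by projecting onto a basis of~$\A$. Together with~\eqref{eq:def-Y-2} this exhibits $Y$ as the link of the origin in an intersection of $n+1$ real quadrics in~$\R^{4n+2}$. For $\A=\C$ this is precisely the setting of~\cite{GomezLopezDeMedrano:2014}, where the connected-sum decomposition is already known; so the main task is to verify that the symmetric configuration~\eqref{eq:cond-lambda} of the $\lambda_{k}$, after enrichment by a basis of~$\A$, satisfies the ``weak hyperbolicity''-type genericity hypothesis driving the classical theorem, uniformly for $\A=\R$, $\C$, $\H$, $\O$.

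Complementarily, I would analyze $Y$ via the moment-type projection $\pi\colon Y\to\Rge^{n+1}$, $(\ZZ,V,W)\mapsto(|Z_{0}|^{2},\dots,|Z_{n}|^{2})$. On the interior of the image the fiber consists of $(V,W)\in\A^{2}$ with $VW$ a prescribed nonzero element of~$\A$ and $|V|^{2}+|W|^{2}$ a prescribed positive real number; because $\A$ is a normed division algebra, this fiber is a disjoint union of one or two $(n-1)$-spheres, collapsing over an explicit discriminant hypersurface. The image of $\pi$ is cut out in $\Rge^{n+1}$ by the non-negativity of the discriminant, and is itself a simple polyhedral-type region. Together these ingredients give $Y$ the structure of an explicit, $T$-equivariant handle decomposition from which one reads off the connected-sum decomposition summand by summand, with the symmetry~\eqref{eq:cond-lambda} enumerating the handle indices and forcing them to come in complementary pairs.

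The main obstacle I anticipate is the uniformity across $\A=\R$, $\C$, $\H$, $\O$. The existing criteria are phrased for scalar real quadrics, whereas~\eqref{eq:def-Y-1} hides the multiplication of~$\A$ in the product~$VW$; after expansion the coefficients entangle the symmetric $\lambda_{k}$ with the structure constants of~$\A$, and for non-associative~$\O$ the check is most delicate. A uniform fallback is to use a $T$-invariant Morse function — for instance $\sum_{k}a_{k}|Z_{k}|^{2}$ with generic coefficients $a_{k}$ — whose critical manifolds are forced by~\eqref{eq:cond-lambda} to appear in matched index pairs assembling into products of two spheres; a perfect-Morse argument (justified by the torsion-free cohomology~\eqref{eq:HTZ} together with the sphere-pair structure of the critical set) then delivers the connected-sum decomposition directly, bypassing the quadric-genericity verification altogether.
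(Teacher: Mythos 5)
Your proposal correctly locates the right circle of ideas (intersections of real quadrics, \cite{GomezLopezDeMedrano:2014}, Morse theory), but it has a genuine gap for $\A=\H,\O$: after expanding \eqref{eq:def-Y-1} into $n$ real quadrics, the resulting system is \emph{not} of the diagonal form to which weak hyperbolicity and the classification of L\'opez de Medrano apply, because $VW$ is an indefinite bilinear product, not a sum of squared real variables. You acknowledge the entanglement with the structure constants, but your fallback (``perfect Morse'' reasoning) does not close it: perfectness only controls homology, and a connected-sum decomposition is a statement about diffeomorphism type, for which one must also establish simple connectedness, stable parallelizability, and a handle-cancellation argument. None of these three ingredients appears in your sketch, yet all are essential — without simple connectedness the conclusion is outright false in general, and without stable parallelizability one cannot invoke the handlebody classification.

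The paper proceeds differently. For $n=2$ it does use a unitary change of coordinates to turn $VW$ into $(V^{2}+W^{2})/2$ and thereby reduce to the diagonal case of \cite[Main Thm.]{GomezLopezDeMedrano:2014}, which is a refinement of your first approach, valid only for $\A=\C$. For $n=4,8$ it treats the real manifold $\bar Y$ first: by Lemma~\ref{pi-1-bar-Y} it is simply connected; one introduces a new variable $Z_{n+1}\ge 0$ to exhibit $\bar Y$ as the boundary of a stably parallelizable, simply connected compact manifold $L$ having the homology of a wedge of $n$-spheres, and applies Kosinski \cite[Thm.~VIII.4.8]{Kosinski:1993} to conclude $L$ is a boundary-connected sum of copies of $D^{n+1}\times S^{n}$, hence $\bar Y\cong\#\,S^{n}\times S^{n}$. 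The complex case $Y$ is then obtained by iteratively adding real variables with repeated $\lambda_{k}$'s, using the inductive argument of \cite[Thm.~1.1]{GitlerLopezDeMedrano:2013} (with the extra $VW$ term carried along). Your ``moment-type projection'' and Morse-function ideas could plausibly be developed into an alternative route, but as written they lack precisely the key inputs (simple connectedness, stable parallelizability, the real-to-complex induction) that make the paper's argument work.
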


An explicit description of the summands is given by formula~\eqref{eq:sum-complex-case}.

\smallskip

We also study ``real versions'' of the manifolds~\(X\),~\(Y\) and~\(Z\) (Section~\ref{sec:real})
as well as generalization to other collections of vectors~\(\bflambda\) (Section~\ref{sec:generalization}).

\section{A diffeomorphism between \texorpdfstring{\(X\) and \(Y\)}{X and Y}}

We start in a slightly more general setting and assume
that \(\lambda_{0}\),~\dots,~\(\lambda_{n}\) are vectors in~\(\R^{n}\)
satisfying~\eqref{eq:cond-lambda}.

\begin{remark}
  \label{rem:contruct-lambda}
  Such vectors
  exist for all~\(n\ge1\) and can be constructed inductively:
  For~\(n=1\) one takes~\(\lambda_{0}=-1\) and~\(\lambda_{1}=1\).
  If \(\mu_{0}\),~\dots,~\(\mu_{n-1}\) % satisfying~\eqref{eq:cond-lambda}
  is a solution for~\(\R^{n-1}\), then
  \begin{equation}
    \lambda_{k} = \Biggl( \sqrt{1-\frac{1}{n^{2}}}\,\mu_{k},-\frac{1}{n}\Biggr)
    \quad\text{for~\(0\le k\le n-1\),}\qquad
    \lambda_{n} = (0,1)
  \end{equation}
  is one for~\(\R^{n}=\R^{n-1}\times\R\).
  From the following lemma
  one can deduce that any other solution of~\eqref{eq:cond-lambda}
  is of the form~\(A\,\lambda_{0}\),~\dots,~\(A\,\lambda_{n}\) for some~\(A\in O(n)\).
\end{remark}

\begin{lemma}
  \label{thm:lambda-lin}
  Let \(c_{0}\),~\dots,~\(c_{n}\in\R\). Then
  \begin{equation*}
    \sum_{k=0}^{n} c_{k}\lambda_{k} = 0
    \quad\Longleftrightarrow\quad
    c_{0} = \dots = c_{n}.
  \end{equation*}
  In particular, the~\(\lambda_{k}\) span \(\R^{n}\). % over~\(\R\).
\end{lemma}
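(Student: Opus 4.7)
The plan is to verify both implications by direct inner-product computations, exploiting the highly symmetric Gram matrix prescribed by~\eqref{eq:cond-lambda}.

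For the ``\(\Leftarrow\)'' direction it suffices to show that the particular combination \(\sum_{k=0}^{n}\lambda_{k}\) vanishes. I would compute its squared norm:
\begin{equation*}
  \Bigl|\sum_{k=0}^{n}\lambda_{k}\Bigr|^{2}
  = \sum_{k,l=0}^{n}\pair{\lambda_{k},\lambda_{l}}
  = (n+1)\cdot 1 + (n+1)n\cdot\Bigl(-\tfrac{1}{n}\Bigr) = 0,
\end{equation*}
so that \(\sum_{k}\lambda_{k}=0\) and hence any constant sequence \(c_{0}=\dots=c_{n}\) lies in the kernel.

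For the ``\(\Rightarrow\)'' direction, suppose \(\sum_{k}c_{k}\lambda_{k}=0\) and set \(S=\sum_{k}c_{k}\). Taking the inner product with a fixed \(\lambda_{l}\) and using~\eqref{eq:cond-lambda} yields
\begin{equation*}
  0 = c_{l} - \frac{1}{n}\sum_{k\ne l}c_{k} = c_{l}\Bigl(1+\tfrac{1}{n}\Bigr) - \tfrac{1}{n}S,
\end{equation*}
so \(c_{l}=S/(n+1)\), which is independent of \(l\). Thus all \(c_{l}\) coincide.

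Finally, the ``In particular'' statement follows at once: the \(n+1\) vectors \(\lambda_{0},\dots,\lambda_{n}\in\R^{n}\) have a \(1\)-dimensional space of linear relations (spanned by \(\bfone\)) by what has just been proved, so their span has dimension \((n+1)-1=n\), hence equals \(\R^{n}\). There is no real obstacle here; the entire argument is a two-line calculation, and the only thing to be careful about is the book-keeping of the ``diagonal versus off-diagonal'' terms in the Gram matrix.
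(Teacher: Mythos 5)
Your proof is correct and takes essentially the same approach as the paper: both directions are direct inner-product computations against the Gram matrix from~\eqref{eq:cond-lambda}. The only cosmetic difference is in the forward direction, where you pair with a single \(\lambda_{l}\) and solve for \(c_{l}\) in terms of the total sum, while the paper pairs with \(\lambda_{l}-\lambda_{m}\) to conclude \(c_{l}=c_{m}\) directly; the backward direction is identical.
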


\begin{proof}
  In the forward direction the identity
  \begin{equation}
    0 = \bigpair{\lambda_{l}-\lambda_{m},\sum_{k=0}^{n} c_{k}\lambda_{k}}
    = c_{l}-\frac{c_{m}}{n}-c_{m}+\frac{c_{l}}{n}
    = \frac{n+1}{n}(c_{l}-c_{m})
  \end{equation}
  implies that all \(c_{k}\) are equal.
  The backward direction follows from
  \begin{equation}
    \Bigl| \sum_{k=0}^{n}\lambda_{k} \Bigr|^{2}
    = \sum_{k=0}^{n} \pair{\lambda_{k},\lambda_{k}} + \sum_{k\ne l}\pair{\lambda_{k},\lambda_{l}}
    = (n+1) - \frac{n(n+1)}{n} = 0.
    \qedhere
  \end{equation}
\end{proof}

\begin{lemma}
  \label{thm:lambda-proj}
  For any~\(x\in\R^{n}\),
  \begin{equation*}
    x = \frac{n}{n+1}\sum_{k=0}^{n}\pair{\lambda_{k},x}\lambda_{k}.
  \end{equation*}
\end{lemma}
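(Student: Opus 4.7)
The plan is to recognize that both sides of the identity are linear in \(x\), so it suffices to verify it on a spanning set of \(\R^{n}\). By Lemma~\ref{thm:lambda-lin}, the vectors \(\lambda_{0}\),~\dots,~\(\lambda_{n}\) span \(\R^{n}\), so I will test the identity on each~\(\lambda_{j}\).

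Define the linear endomorphism
\begin{equation*}
  T(x) = \frac{n}{n+1}\sum_{k=0}^{n}\pair{\lambda_{k},x}\lambda_{k}
\end{equation*}
of \(\R^{n}\). I would compute \(T(\lambda_{j})\) by expanding the sum using condition~\eqref{eq:cond-lambda}: the \(k=j\) term contributes \(\lambda_{j}\) and each \(k\ne j\) term contributes \(-\frac{1}{n}\lambda_{k}\). Then I would invoke the key identity \(\sum_{k=0}^{n}\lambda_{k}=0\), which is exactly the content of the computation used at the end of the proof of Lemma~\ref{thm:lambda-lin} (so this may be cited directly or extracted as a one-line remark). This identity rewrites \(\sum_{k\ne j}\lambda_{k} = -\lambda_{j}\), collapsing the inner expression to \(\lambda_{j} + \frac{1}{n}\lambda_{j} = \frac{n+1}{n}\lambda_{j}\), and the prefactor \(\frac{n}{n+1}\) cancels to give \(T(\lambda_{j})=\lambda_{j}\).

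Since \(T\) agrees with the identity on the spanning set \(\{\lambda_{0},\dots,\lambda_{n}\}\), we have \(T=\mathrm{id}_{\R^{n}}\), which is the claimed formula. There is no real obstacle here; the only thing to be careful about is to invoke (or re-derive) the vanishing \(\sum_{k}\lambda_{k}=0\) from the previous lemma, since this is what makes the prefactor \(\frac{n}{n+1}\) come out correctly.
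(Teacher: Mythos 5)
Your proof is correct and follows essentially the same route as the paper's: reduce to checking the identity on the spanning set $\{\lambda_0,\dots,\lambda_n\}$ by linearity, then use $\sum_k\lambda_k=0$ (from Lemma~\ref{thm:lambda-lin}) to collapse the sum to $\frac{n+1}{n}\lambda_j$. The only cosmetic difference is that you name the endomorphism $T$ explicitly, but the computation is identical.
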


\begin{proof}
  Because
  the \(\lambda_{k}\) span \(\R^{n}\) over~\(\R\) by Lemma~\ref{thm:lambda-lin},
  it suffices to verify the claim for~\(x=\lambda_{l}\),
  where again by Lemma~\ref{thm:lambda-lin} we have
  \begin{equation}
    \sum_{k=0}^{n}\pair{\lambda_{k},\lambda_{l}}\lambda_{k}
    = \lambda_{l} - \frac{1}{n}\sum_{k\ne l}\lambda_{k}
    = \lambda_{l} + \frac{1}{n}\lambda_{l}
    = \frac{n+1}{n}\lambda_{l}.
    \qedhere
  \end{equation}
\end{proof}

We now additionally assume \(n\in\{1,2,4,8\}\) and that
\(\lambda_{0}\),~\dots,~\(\lambda_{n}\) are elements of
the \(n\)-dimensional normed real division algebra~\(\A\).
The scalar product induced by the norm of~\(\A\)
is related to the multiplication
by~\(\pair{a,b}=\Re(\bar a b)\) for all~\(a\),~\(b\in\A\).

Recall that unlike the other division algebras, the octonions are not associative,
but they are still alternative: The subalgebra generated by any two elements~\(a\),~\(b\in\O\)
is associative. By definition, this subalgebra contains \(1\) and therefore
also the conjugates~\(\bar a\) and~\(\bar b\).
A consequence of alternativity is the ``braid law'',
\cf~\cite[Sec.~6.2]{ConwaySmith:2003} or~\cite[Thm.~9.3.1]{EbbinghausEtAl:1993}:
For all~\(a\),~\(b\),~\(c\in\A\) one has
\begin{equation}
  \label{eq:a-b-mu}
  \pair{ab,c} = \pair{b,\bar a c}.
\end{equation}
Good references for octonions and division algebras in general
are \cite{Baez:2002}, \cite{ConwaySmith:2003} and~\cite[Part~B]{EbbinghausEtAl:1993}.

\begin{lemma}
  \label{thm:plane-VW}
  Let \(\uu\in\Span(\bflambda,\bfone)\), say
  \(\uu = V\bflambda + W\,\bfone\) with~\(V\),~\(W\in\A\). Then
  \begin{equation*}
    V = \frac{1}{n+1}\sum_{k=0}^{n}u_{k}\bar\lambda_{k}
    \qquad\text{and}\qquad
    W = \frac{1}{n+1}\sum_{k=0}^{n}u_{k}.
  \end{equation*}
\end{lemma}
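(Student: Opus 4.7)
The plan is to substitute the expression $u_k = V\lambda_k + W$ directly into the two sums on the right-hand sides and simplify, using only two ingredients: the identity $\sum_{k=0}^{n}\lambda_{k}=0$ (which appears explicitly inside the backward direction of the proof of Lemma~\ref{thm:lambda-lin}, since $|\sum_k\lambda_k|^2=0$), and the identity $(V\lambda_k)\bar\lambda_k = V$ for each $k$.

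First I would verify the second identity. Since $|\lambda_k|=1$, we have $\lambda_k\bar\lambda_k = 1$, so the claim $(V\lambda_k)\bar\lambda_k = V(\lambda_k\bar\lambda_k) = V$ is immediate in $\R$, $\C$, $\H$. For $\A=\O$ the reassociation is still legal by alternativity: the subalgebra generated by $V$ and $\lambda_k$ is associative and contains $\bar\lambda_k$, as pointed out in the paragraph preceding~\eqref{eq:a-b-mu}.

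With these two facts in hand, the two computations are one-liners. For the first formula,
\begin{equation*}
  \sum_{k=0}^{n} u_{k}\bar\lambda_{k}
  = \sum_{k=0}^{n} (V\lambda_{k})\bar\lambda_{k} + W\sum_{k=0}^{n}\bar\lambda_{k}
  = (n+1)\,V + W\cdot 0 = (n+1)\,V,
\end{equation*}
and for the second,
\begin{equation*}
  \sum_{k=0}^{n} u_{k} = V\sum_{k=0}^{n}\lambda_{k} + (n+1)\,W = (n+1)\,W.
\end{equation*}
Dividing by $n+1$ in each case yields the assertion.

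There is no real obstacle here; the only thing one must be careful about is the non-associativity in the octonionic case, and this is precisely what the alternative law handles. Note also that the hypothesis $\uu\in\Span(\bflambda,\bfone)$ guarantees existence of some $V,W$, while Lemma~\ref{thm:lambda-lin} implies that the pair is unique (so the displayed formulas are unambiguous).
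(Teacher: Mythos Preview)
Your proof is correct and follows essentially the same route as the paper: substitute $u_k = V\lambda_k + W$ into the two sums, use $\sum_k\lambda_k=0$ from Lemma~\ref{thm:lambda-lin}, and invoke alternativity to collapse $(V\lambda_k)\bar\lambda_k$ to $V$. The only cosmetic difference is that the paper writes the intermediate step as $V\sum_k\lambda_k\bar\lambda_k$ rather than immediately reducing each term to $V$.
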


In particular, the map~\((V,W)\mapsto\uu\) is injective, so that
\(\Span(\bflambda,\bfone)\) is a real subspace of~\(\A^{n+1}\) of dimension~\(2n\).

\begin{proof}
  By Lemma~\ref{thm:lambda-lin}, we have
  \begin{equation}
    \sum_{k=0}^{n}u_{k} = V\sum_{k=0}^{n}\lambda_{k} + (n+1) W = (n+1)W
  \end{equation}
  and together with alternativity also
  \begin{equation}
    \sum_{k=0}^{n}u_{k}\bar\lambda_{k}
    = \sum_{k=0}^{n}(V\lambda_{k})\bar\lambda_{k} + W \sum_{k=0}^{n}\bar\lambda_{k}
    = V\sum_{k=0}^{n}\lambda_{k}\bar\lambda_{k} = (n+1) V.
    \qedhere
  \end{equation}  
\end{proof}

Let us give names to the defining equations for~\(X\) and~\(Y\):
\begin{align}
  F_{0}\colon \C^{n+1}\times\A^{2} &\to \R,
  & (\ZZ,V,W) &\mapsto \sum_{k=0}^{n}|Z_{k}|^{2} + |V|^{2} + |W|^{2} - 1 \\
  F\colon \C^{n+1}\times\A^{2} &\to \A,
  & (\ZZ,V,W) &\mapsto \frac{n}{2}\sum_{k=0}^{n}|Z_{k}|^{2}\lambda_{k} + V W, \\
  \FF\colon \C^{n+1}\times\A^{2} &\to \R\times\A,
  & (\ZZ,V,W) &\mapsto \bigl(F_{0}(\ZZ,V,W) ,F(\ZZ,V,W) \bigr), \\
  \shortintertext{and for~\(0\le k\le n\)}
  G_{k}\colon \C^{n+1}\times\A^{n+1} &\to \R,
  & (\zz,\uu) &\mapsto |z_{k}|^{2} + |u_{k}|^{2} - 1, \\
  \GG\colon \C^{n+1}\times\A^{n+1} &\to \R^{n+1},
  & (\zz,\uu) &\mapsto \bigl(G_{0}(\zz,\uu),\dots,G_{n}(\zz,\uu)\bigr)
\end{align}
Then \(X=\GG^{-1}(0)\cap\Span(\bflambda,\bfone)\) and \(Y=\FF^{-1}(0)\).
We also extend the \(T\)-actions on~\(X\) and~\(Y\) to the ambient manifolds~\(\C^{n+1}\times\A^{n+1}\)
and~\(\C^{n+1}\times\A^{2}\) in the obvious way, so that \(\FF\) and~\(\GG\) are \(T\)-invariant.

\begin{proposition}
  \label{thm:Y-mf}
  The origin~\(0\in\R\times\A\) is a regular value of~\(\FF\). Thus,
  \(Y\) is a closed \(T\)-submanifold of~\(\C^{n+1}\times\A^{2}\).
\end{proposition}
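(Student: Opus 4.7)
The plan is to verify pointwise surjectivity of $d\FF$ on $Y=\FF^{-1}(0)$. Dualizing, it suffices to show that any $(a,b)\in\R\times\A$ for which the co-vector $a\,dF_{0}+\pair{b,dF}$ vanishes on the whole tangent space at some~$(\ZZ,V,W)\in Y$ must satisfy $a=0$ and $b=0$. First I compute
\begin{equation*}
  dF_{0} = 2\sum_{k=0}^{n}\pair{Z_{k},dZ_{k}} + 2\pair{V,dV} + 2\pair{W,dW}
\end{equation*}
and $dF = n\sum_{k}\pair{Z_{k},dZ_{k}}\lambda_{k} + (dV)W + V\,dW$, and invoke the braid law~\eqref{eq:a-b-mu} to rewrite $\pair{b,(dV)W}=\pair{b\bar W,dV}$ and $\pair{b,V\,dW}=\pair{\bar V b,dW}$. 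Requiring the combined co-vector to vanish in each tangent direction separately yields the system
\begin{equation*}
  (2a+n\pair{b,\lambda_{k}})\,Z_{k}=0\quad(0\le k\le n),
  \qquad 2aV+b\bar W=0,
  \qquad 2aW+\bar V b=0,
\end{equation*}
which I will combine with the defining equations $F_{0}=0$ and $F=0$.

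Next I split on whether $a$ vanishes. If $a=0$ and $b\ne0$, absence of zero divisors in~$\A$ forces $V=W=0$; then $F=0$ reduces to $\sum_{k}|Z_{k}|^{2}\lambda_{k}=0$, and Lemma~\ref{thm:lambda-lin} makes all $|Z_{k}|^{2}$ equal. They cannot all vanish (else $F_{0}=-1$), and if they are all positive then $\pair{b,\lambda_{k}}=0$ for every~$k$, which contradicts the spanning part of Lemma~\ref{thm:lambda-lin}. So $a=0$ entails $b=0$. If instead $a\ne0$, I rescale so $a=\tfrac{1}{2}$. Eliminating between $V=-b\bar W$ and $W=-\bar V b$ and using alternativity---the subalgebra generated by $b$ and $W$ is associative and contains $\bar b$---I obtain $W(1-|b|^{2})=0$, and symmetrically $V(1-|b|^{2})=0$. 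Either $b=0$, which forces $Z_{k}=V=W=0$ and violates $F_{0}=0$, or $|b|=1$, in which case $\pair{b,\lambda_{k}}=-1/n$ for every~$k$ with $Z_{k}\ne0$. Pairing $F=0$ with~$b$ and simplifying $VW=(-b\bar W)W=-b\,|W|^{2}$ by alternativity then yields
\begin{equation*}
  -\tfrac{1}{2}\sum_{k=0}^{n}|Z_{k}|^{2} = |W|^{2},
\end{equation*}
so all $Z_{k}=0$ and $W=0$, hence also $V=0$, again contradicting $F_{0}=0$. This exhausts all cases and proves surjectivity.

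I expect the main obstacle to be controlling the octonion arithmetic: every product I manipulate must lie in an associative subalgebra, so I will need to check at each step that only two ``free'' octonions appear at a time (together with their conjugates and~$1$). Once this bookkeeping is in hand, the rest is essentially linear, supplied by Lemma~\ref{thm:lambda-lin} and the norm identities on~$\A$. That $Y$ is then a closed $T$-submanifold follows from the regular value theorem together with the $T$-invariance of~$\FF$ and the fact that $\FF^{-1}(0)$ is closed.
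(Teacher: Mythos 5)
Your proof is correct, but it takes a genuinely different route from the paper's. You verify surjectivity of $D\FF$ directly, by dualizing and showing that any $(a,b)$ annihilating the image must vanish; this requires a case split on $a$, then on $|b|$, and a fair amount of octonion bookkeeping (alternativity for $(W\bar b)b = W|b|^{2}$, $(b\bar W)W = b|W|^{2}$, and both variants of the braid law $\pair{ab,c}=\pair{b,\bar ac}=\pair{a,c\bar b}$). The paper instead exploits the fact that $F$ is homogeneous of degree~$2$: its zero set is a cone, and a cone automatically meets the unit sphere $F_{0}^{-1}(0)$ transversally (the radial direction lies in $\ker DF$ but not in $\ker DF_{0}$). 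This reduces the problem to showing that $DF$ alone is surjective at nonzero points of $F^{-1}(0)$, for which it suffices to observe that left/right multiplication by a nonzero $V$ or $W$ is invertible in a division algebra, and, when $V=W=0$, that Lemma~\ref{thm:lambda-lin} forces all $Z_{k}\ne0$ so that the $\lambda_{k}$ spanning $\A$ finishes the job. The paper's reduction is slicker and sidesteps almost all of the alternativity manipulations you need; on the other hand your argument is self-contained, works without invoking the cone/sphere transversality observation, and makes the adjoint of $D\FF$ explicit. Both are valid; if you keep your version, you might spell out that the identity $\pair{ab,c}=\pair{a,c\bar b}$ (the form of the braid law you use for the $dV$ term) follows from~\eqref{eq:a-b-mu} together with $\pair{\bar x,\bar y}=\pair{x,y}$.
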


\begin{proof}
  Because \(F\) is homogeneous, the inverse image of~\(0\in\A\) is a cone.
  It therefore suffices to show that all non-zero solutions of~\(F(\ZZ,V,W)=0\)
  are regular points, for then they will form a submanifold transversal
  to the unit sphere~\(F_{0}^{-1}(0)\). % given by~\eqref{eq:def-Y-2}.

  \def\AA{\boldsymbol{A}}
  The derivative of~\(F\) is
  \begin{equation}
    DF(\ZZ,V,W)\cdot(\AA,B,C) =
    n\sum_{k=0}^{n}\pair{Z_{k},A_{k}}\lambda_{k} + B W + V C.
  \end{equation}
  Since \(\A\) is a division algebra, \(DF(\ZZ,V,W)\) is surjective
  whenever \((V,W)\ne(0,0)\). If \(F(\ZZ,0,0)=0\) and \(\ZZ\ne0\), then
  all \(Z_{k}\) are non-zero by Lemma~\ref{thm:lambda-lin}.
  Because the \(\lambda_{k}\) span \(\A\) over~\(\R\), this implies
  that \(DF(\ZZ,0,0)\) is again surjective.
\end{proof}

\begin{theorem}
  \label{thm:diffeo-X-Y}
  \( \)
  \begin{enumerate}
  \item
    \label{p1}
    \(X\) is a closed \(T\)-sub\-man\-i\-fold of~\(\C^{n+1}\times\A^{n+1}\).
  \item
    \label{p2}
    The map~\(\C^{n+1}\times\A^{2}\to\C^{n+1}\times\A^{n+1}\) given by
    \begin{equation*}
      \zz = \sqrt{n+1}\,\ZZ
      \qquad\text{and}\qquad
      \uu = \bar V\bflambda + W\,\bfone
    \end{equation*}
    restricts to a \(T\)-equivariant diffeomorphism~\(Y\to X\).
  \end{enumerate}
\end{theorem}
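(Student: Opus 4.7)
The plan is to prove part~(2) directly and deduce part~(1) formally. Denote the map in part~(2) by $\Phi$. It is manifestly smooth, $\R$-linear, and $T$-equivariant, since the torus acts only on the $\ZZ$/$\zz$-coordinates. By Lemma~\ref{thm:plane-VW} (applied with $\bar V$ in place of $V$), the assignment $(V,W)\mapsto \bar V\bflambda + W\bfone$ is a linear isomorphism onto $\Span(\bflambda,\bfone)$, so $\Phi$ restricts to a linear isomorphism from $\C^{n+1}\times\A^2$ onto $\C^{n+1}\times\Span(\bflambda,\bfone)$. It therefore suffices to show that $\Phi$ sends $Y$ bijectively onto $X$.

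For the inclusion $\Phi(Y)\subset X$, the key computation is
\[
  |u_k|^2 = |\bar V\lambda_k + W|^2 = |V|^2 + 2\pair{\lambda_k, VW} + |W|^2,
\]
where the cross term is rewritten using the braid law~\eqref{eq:a-b-mu}. Substituting $VW = -\tfrac{n}{2}\sum_j|Z_j|^2\lambda_j$ from $F=0$, evaluating inner products via~\eqref{eq:cond-lambda}, and then applying $F_0 = 0$ should yield $|u_k|^2 = 1-(n+1)|Z_k|^2 = 1 - |z_k|^2$, which is the $k$-th defining equation of $X$. The reverse inclusion $\Phi^{-1}(X)\subset Y$ is obtained by running this computation backwards on $(\zz,\uu)\in X$: summing $|z_k|^2 + |u_k|^2 = 1$ over $k$ and using $\sum_k\lambda_k = 0$ (Lemma~\ref{thm:lambda-lin}) recovers $F_0 = 0$, after which subtracting this from each individual equation yields $\pair{\lambda_k,\, VW + \tfrac{n}{2}\sum_j|Z_j|^2\lambda_j} = 0$ for every $k$. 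Since the $\lambda_k$ span $\A$ over~$\R$ (Lemma~\ref{thm:lambda-lin} again), this forces $F = 0$.

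Combined with the $T$-equivariance and smoothness already noted, this proves part~(2). Part~(1) then follows immediately, since $Y$ is a closed $T$-submanifold of $\C^{n+1}\times\A^2$ by Proposition~\ref{thm:Y-mf}, and these properties transfer under the $T$-equivariant diffeomorphism $\Phi|_Y$. The only genuinely delicate step is the cross-term computation above, where the nonassociativity of the octonions must be handled via the braid law; everything else reduces to the linear-algebraic identities encoded in Lemmas~\ref{thm:lambda-lin} and~\ref{thm:plane-VW}.
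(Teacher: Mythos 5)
Your proposal is correct and follows essentially the same strategy as the paper: establish via Lemma~\ref{thm:plane-VW} that the ambient map is a linear isomorphism onto~\(\C^{n+1}\times\Span(\bflambda,\bfone)\), compute \(|u_k|^2 = |V|^2 + |W|^2 + 2\pair{\lambda_k, VW}\) using the braid law, and relate the defining equations of~\(X\) and~\(Y\) through Lemma~\ref{thm:lambda-lin}. The only difference is packaging: the paper records the relation as a single identity \(G_k(\zz,\uu) = F_0(\ZZ,V,W) + 2\pair{\lambda_k, F(\ZZ,V,W)}\), assembles the corresponding invertible \((n+1)\times(n+1)\) matrix, and uses it to transfer the regular-value property of~\(\FF\) (Proposition~\ref{thm:Y-mf}) to the restricted system~\(\GG = 0\), thereby proving both parts at once. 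You instead verify the two set-theoretic inclusions \(\Phi(Y)\subset X\) and \(\Phi^{-1}(X)\subset Y\) separately and deduce that \(X\) is a closed submanifold because it is the image of one under an ambient linear embedding; this is logically equivalent, since your forward and backward computations are exactly the matrix and its inverse in disguise. Both arguments are sound; the paper's matrix formulation is marginally more compact, while yours makes the two inclusions explicit.
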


\begin{proof}
  By Lemma~\ref{thm:plane-VW},
  the map defined in part~\eqref{p2}
  is an equivariant diffeomorphism
  \begin{equation}
    \C^{n+1}\times\A^{2} \to \C^{n+1}\times\Span(\bflambda,\bfone)\subset\C^{n+1}\times\A^{n+1}.
  \end{equation}
  For any~\((\ZZ,V,W)\in\C^{n+1}\times\A^{2}\) we write \((\zz,\uu)\) for its image. % \(=F(\ZZ,V,W)\).
  We have
  \begin{align}
    |z_{k}|^{2} &= (n+1)|Z_{k}|^{2}, \\
    \label{eq:uk2}
    |u_{k}|^{2} &= |\bar V\lambda_{k}|^{2} + 2\pair{\bar V\lambda_{k},W} + |W|^{2}
    = |V|^{2} + |W|^{2} + 2\pair{\lambda_{k},V W}
  \end{align}
  by~\eqref{eq:a-b-mu}, hence
  \begin{align}
    |z_{k}|^{2} + |u_{k}|^{2} -1
    &= \sum_{l=0}^{n}|Z_{l}|^{2} + |V|^{2} + |W|^{2} -1 \\
    &\qquad + n|Z_{k}|^{2} - \sum_{l\ne k}|Z_{l}|^{2} + 2\pair{\lambda_{k},V W} \\
    &= F_{0}(\ZZ,V,W) + 2\pair{\lambda_{k},F(\ZZ,V,W)}.
  \end{align}
  In matrix notation, this means
  \begin{equation}
    \GG(\zz,\uu)
    = \begin{bmatrix}
      1 & 2\lambda_{0}^{T} \\ \vdots & \vdots \\ 1 & 2\lambda_{n}^{T}
    \end{bmatrix}
    \FF(\ZZ,V,W).
  \end{equation}
  Thus, under the transformation~\((\ZZ,V,W)\mapsto(\zz,\uu)\)
  the restrictions of the equations defining~\(X\)
  to~\(\Span(\bflambda,\bfone)\)
  are related to the equations defining~\(Y\) via some matrix.
  It follows from Lemma~\ref{thm:lambda-lin} that
  this matrix is invertible.
  Together with Proposition~\ref{thm:Y-mf}
  this proves the first claim and at the same time the second.
\end{proof}

\begin{remark}
  \label{rem:lambda-C}
  Assume \(\A=\C\). Then one has \(\lambda_{k}=e^{2\pi i k/3}\mu\) for some~\(\mu\in\C\) of norm~\(1\),
  hence
  in addition to~\(\lambda_{0}+\lambda_{1}+\lambda_{2}=0\)
  also \(\lambda_{0}^{2}+\lambda_{1}^{2}+\lambda_{2}^{2}=0\).
  This shows
  \begin{equation}
    \uu\in \Span(\bflambda,\bfone)
    \quad\Leftrightarrow\quad
    \lambda_{0}u_{0}+\lambda_{1}u_{1}+\lambda_{2}u_{2}=0.
  \end{equation}
  Substituting \(\lambda_{k}u_{k}\) for each~\(u_{k}\), we obtain the condition
  \begin{equation}
    \label{eq:eq-u-111}
    u_{0}+u_{1}+u_{2}=0.
  \end{equation}
  Combining this equation with~\eqref{eq:def-X-1},
  we see that in this case \(X\) is
  the big polygon space~\(X_{1,1}(1,1,1)\)
  introduced in~\cite{Franz:maximal}.

  As discussed in~\cite[Sec.~2]{Franz:maximal}, there is essentially only one
  more \(T\)-manifold one can produce if one replaces \eqref{eq:eq-u-111}
  by some other plane in the \(\uu\)-variables
  (assuming that it intersects the product of spheres
  gives by~\eqref{eq:def-X-1} transversally): This is the manifold~%r
  \(S^{3}\times S^{3}\times S^{1}\)
  obtained from the equation~\(u_{3}=0\), where the \(3\)~circle factors of~\(T\)
  rotate the \(3\)~spheres through scalar multiplication in the first complex coordinate.
\end{remark}

\section{A homeomorphism with \texorpdfstring{\(Z\)}{Z}}
\label{sec:homeo-Z}

We start with some general considerations.
Let \(M\) be a compact Hausdorff space with an action of~\(T\). The orbit space~\(Q = M/T\) comes
with a partition~\((Q_{K})\) indexed by the closed subgroups~\(K\subset T\),
namely the one that associates to an orbit~\(q\in Q\) the common isotropy group~\(K\)
of its points.

If the projection~\(M\to Q\) admits a section~\(\sigma\), then the map
\begin{equation}
  T\times Q \to M,
  \quad
  (g,q) \mapsto g\cdot\sigma(q)
\end{equation}
is a closed surjection, hence \(M\) is equivariantly homeomorphic to
% the identification space
\begin{equation}
  \label{eq:T-ident-space}
  T\times Q \bigm/ \sim \, ,
\end{equation}
where \((g,q)\sim(g',q')\) if and only if \(q=q'\), say contained in~\(Q_{K}\),
and \(g^{-1}g'\in K\).
Conversely, if we are given a space~\(Q\) together with a partition~\((Q_{K})\),
then we can define a (possibly non-Hausdorff) \(T\)-space by~\eqref{eq:T-ident-space}.

Next we recall the construction of the mutant~\(Z\) from~\cite[Sec.~4]{FranzPuppe:2008}.
Consider the standard action of~\(T=(S^{1})^{n+1}\) on~\(\C^{n+1}\) and on its one-point compactification~\(S\).
We identify \(S\) with the unit sphere~\(S^{2n+2}\) in~\(\C^{n+1}\times\R\),
and the quotient~\(S/T\) with the subset
\begin{equation}
  \Sge = \bigl\{\, (\aa,b)\in S^{2n+2} \bigm| \text{\(a_{k}\in\Rge\) for all~\(k\)} \,\bigr\},
\end{equation}
where we have written \(\Rge\) for the non-negative real numbers. Note that \(\Sge\)
is a ball of dimension~\(n+1\), and its boundary is the \(n\)-sphere
given by the~\((\aa,b)\) with~\(a_{k}=0\) for at least one~\(k\).

Let \(Q\) be a \(2n\)-ball, and let \(\pi\colon\partial Q\approx S^{2n-1}\to S^{n}\approx \partial\Sge\)
be the Hopf fibration. On \(Q\) we introduce a partition by setting \(Q_{1}=Q\setminus\partial Q\),
and \(Q_{K}=\pi^{-1}((\Sge)_{K})\) for~\(K\ne1\).
The \(T\)-space resulting from~\eqref{eq:T-ident-space} is the mutant~\(Z\);
one can show that it is a compact orientable smooth manifold.
Note that only the coordinate subtori of~\(T\) occur as isotropy groups in~\(S\),
hence in~\(Z\).

\begin{remark}
  \label{rem:Z-fixed-points}
  Let \(T=L\times M\) be a decomposition into coordinate subtori
  with~\(L\ne1\), hence \(m=\dim M\le n\).
  Because the Hopf fibration is trivial over any proper subset of~\(S^{n}\),
  there is an equivariant homeomorphism
  \begin{equation}
    Z^{L} \approx S^{2m}\times S^{n-1}
  \end{equation}
  where \(M\) acts in the usual way on the one-point compactification of~\(\C^{m}\).
  In particular, \(Z^{T}\approx S^{0}\times S^{n-1}\), and
  the quotient~\(Z^{L}/M\) is homeomorphic to~\(D^{m}\times S^{n-1}\) for~\(m>0\).
\end{remark}

The \(T\)-manifolds~\(X\) and~\(Y\) are also of the form~\eqref{eq:T-ident-space}.
The quotient~\(X/T\) can be lifted to the subset
\begin{align}
    \Xge &= \bigl\{\, (\zz,\uu)\in X \bigm| \text{\(z_{k}\in\Rge\) for all~\(k\)} \,\bigr\} \\
\shortintertext{and \(Y/T\) to}
  \Yge &= \bigl\{\, (\ZZ,V,W)\in Y \bigm| \text{\(Z_{k}\in\Rge\) for all~\(k\)} \,\bigr\}.
\end{align}
Note that \((\ZZ,V,W)\in \Yge\) is determined by~\((V,W)\):
By~\eqref{eq:def-Y-1},
the~\(Z_{k}\) are essentially the barycentric coordinates of~\(-V W\)
with respect to the affinely independent vectors \(\lambda_{0}\),~\dots,~\(\lambda_{n}\in\A\).
In the same vein, \((\zz,\uu)\in \Xge\) is determined by~\(\uu\).
Consequently, \(\Xge\) is homeomorphic to
\begin{equation}
  \bigl\{\, \uu\in \A^{n+1} \bigm| \text{\(|u_{k}|\le1\) for all~\(k\)} \,\bigr\}
  \cap \Span(\bflambda,\bfone).
\end{equation}
This is the unit ball of~\(\Span(\bflambda,\bfone)\)
with respect to the norm~\(\|{-}\|_{\infty}\) induced by the maximum norm
of~\(\A^{n+1}\), the maximum being taken over all~\(|u_{k}|\);
the boundary sphere is given by the points where at least one coordinate~\(z_{k}\) vanishes.
Thus, \(\Yge\approx \Xge\) is a \(2n\)-ball, and its boundary~\(\partial \Yge\)
is a \((2n-1)\)-sphere given by the points with~\(Z_{k}=0\) for some~\(k\).

Now define
\begin{equation}
\begin{aligned}
  \phi\colon \Yge &\to \Rge^{n+1}\times\A\times\Rge^{2}, \\
  (\ZZ,V,W) &\mapsto \bigl( \ZZ,V W,|V|,|W| \bigr)
\end{aligned}
\end{equation}
and
\begin{equation}
\begin{aligned}
  \psi\colon \phi(\Yge) &\to \Rge^{n+1}\times\R, \\
  \bigl(\ZZ,V W,|V|,|W| \bigr) &\mapsto \Biggl( \frac{\ZZ}{\sqrt{1-2|VW|}} , \frac{|V|-|W|}{\sqrt{1-2|VW|}} \Biggr).
\end{aligned}
\end{equation}
That the last line is well-defined follows from the following observation:

\begin{lemma}
  \label{thm:max-VW}
  Let \((\ZZ,V,W)\in Y\). Then \(|VW|<\frac{1}{2}\).
\end{lemma}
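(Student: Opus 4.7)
The plan is to combine the two defining equations of~$Y$ with two elementary facts about the norm on the normed division algebra~$\A$: its multiplicativity, $|VW| = |V|\,|W|$, and the AM--GM inequality $|V|\,|W| \le \tfrac{1}{2}(|V|^{2}+|W|^{2})$.

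From~\eqref{eq:def-Y-2} we have $|V|^{2}+|W|^{2} = 1 - \sum_{k=0}^{n}|Z_{k}|^{2} \le 1$, so chaining the two inequalities above immediately yields $|VW| \le \tfrac{1}{2}$. Notably, this weak bound uses only the sphere equation and does not yet require~\eqref{eq:def-Y-1}.

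The remaining task is to exclude the equality case. For $|VW| = \tfrac{1}{2}$ we would need simultaneously AM--GM equality, which forces $|V| = |W|$, and saturation of the sphere estimate, which forces $\sum_{k}|Z_{k}|^{2} = 0$ and hence $\ZZ = 0$. With $\ZZ = 0$, the first defining equation~\eqref{eq:def-Y-1} collapses to $VW = 0$; since $\A$ has no zero divisors this forces $V = 0$ or $W = 0$, and combined with $|V| = |W|$ we obtain $V = W = 0$. But then $|V|^{2}+|W|^{2}+\sum_{k}|Z_{k}|^{2} = 0$, contradicting~\eqref{eq:def-Y-2}.

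I do not foresee any substantive obstacle; the argument is essentially a one-line AM--GM bound, with equation~\eqref{eq:def-Y-1} entering only at the very end to kill the unique boundary configuration $\ZZ = 0$, $|V| = |W| = 1/\sqrt{2}$.
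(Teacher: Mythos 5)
Your proof is correct and follows essentially the same route as the paper's: use the sphere equation to get the weak bound $|VW|\le\tfrac12$ (the paper phrases this as ``the maximum of $|VW|$ on the Euclidean unit ball of $\A^{2}$ is $1/2$, attained at $|V|=|W|=1/\sqrt{2}$'' rather than invoking AM--GM by name, but it is the same estimate), then show that equality forces $\ZZ=0$ and invoke~\eqref{eq:def-Y-1} to derive a contradiction. The only cosmetic difference is at the final step: once you have $VW=0$ you can stop immediately, since this already contradicts $|VW|=\tfrac12\ne0$, so the detour through $V=W=0$ and~\eqref{eq:def-Y-2} is unnecessary.
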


\begin{proof}
  By~\eqref{eq:def-Y-2} we have \(|V|^{2}+|W|^{2}\le 1\).
  The maximum of~\(|VW|\) on the Euclidean unit ball of~\(\A^{2}\) is \(1/2\), and it is assumed at~\(|V|=|W|=1/\sqrt{2}\).
  But then \eqref{eq:def-Y-2} implies \(\ZZ=0\),
  which contradicts \eqref{eq:def-Y-1} since \(V W\ne0\).
\end{proof}

\begin{lemma}
  \label{thm:Q-Hopf}
  \(\phi\colon \partial \Yge\to \phi(\partial \Yge)\)
  is the Hopf fibration~\(S^{2n-1}\to S^{n}\).
\end{lemma}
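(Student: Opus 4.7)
The strategy is to identify $\psi \circ \phi\colon \partial\Yge \to \partial\Sge$ with the classical Hopf fibration $S^{2n-1} \to S^n$ via explicit parametrizations of both sides.

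First I would verify that $\psi\colon \phi(\Yge) \to \Sge$ is a homeomorphism by constructing a continuous inverse: given $(\aa, b) \in \Sge$, the combination of $|V||W| = |VW|$ with $VW = -\tfrac{n}{2}(1 - 2|V||W|)\sum_k a_k^2 \lambda_k$ (a consequence of~\eqref{eq:def-Y-1} and $\ZZ = \aa\sqrt{1 - 2|V||W|}$) yields a single linear equation in $|V||W|$; the relation $|V| - |W| = b\sqrt{1 - 2|V||W|}$ then pins down $(|V|, |W|)$, and thus also $\ZZ$ and $VW$. In particular, $\psi$ restricts to a homeomorphism $\phi(\partial\Yge) \xrightarrow{\sim} \partial\Sge \cong S^n$, so it suffices to show that $\psi \circ \phi$ is the Hopf fibration on $\partial\Yge$.

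Next I would parametrize $\partial\Yge$ by the standard unit sphere $S^{2n-1} \subset \A^2$ via the radial homeomorphism $\mu(V_0, W_0) = (\alpha V_0, \alpha W_0, \ZZ)$, with $\alpha = (1 + 2\max_k \pair{\lambda_k, V_0 W_0})^{-1/2}$ and $\ZZ$ determined by~\eqref{eq:def-Y-1}--\eqref{eq:def-Y-2}. Since $\sum_k \lambda_k = 0$ forces $\max_k \pair{\lambda_k, V_0 W_0} \ge 0$, the scalar $\alpha$ is positive and continuous, and this choice is exactly what makes some $Z_k$ vanish. Computing $\psi \circ \phi \circ \mu$ explicitly, one identifies it, up to a self-homeomorphism of $\partial\Sge \cong S^n$ apparent from the formulas, with the Hopf map $(V_0, W_0) \mapsto (V_0 W_0, |V_0|^2 - |W_0|^2)$ from $S^{2n-1}$ to $S^n \subset \A \times \R$. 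The fibre structure matches: over a generic point, the preimage is $\{(V_0 g, \bar g W_0) \mid g \in \A,\, |g| = 1\} \cong S^{n-1}$, and the two loci $V_0 = 0$ and $W_0 = 0$ (each a copy of $S^{n-1}$) map respectively to the poles $(0, \pm 1) \in \partial\Sge$.

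The main subtlety is the octonionic case: verifying that $(V_0 g)(\bar g W_0) = V_0 W_0$, which is needed for $V_0 W_0$ to be preserved along the fibres, uses the Moufang identity $(xy)(y^{-1}z) = xz$ valid in $\O^\times$, rather than associativity.
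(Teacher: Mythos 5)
The paper's proof is a single short paragraph: it quotes Baez's realization of the Hopf fibration as the restriction of $\pi\colon(V,W)\mapsto(VW,|V|,|W|)$ to the Euclidean unit sphere in $\A^2$, remarks that the conclusion persists when the Euclidean sphere is replaced by the sphere $\partial\Yge$ (the unit sphere of a different norm in the $(V,W)$-coordinates, the point being that radial reparametrization of a sphere preserves the fibration type), and finally notes that the extra $\ZZ$-component of $\phi$ is a function of $(VW,|V|,|W|)$ on $\phi(\Yge)$ and hence carries no additional information. Your plan is a more explicit version of the same underlying fact, but it makes two suboptimal choices and contains a genuine error.

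The detour through $\psi$ is unnecessary. The lemma is a statement about $\phi$ alone; your opening paragraph essentially re-derives Proposition~\ref{thm:p-homeo}, which the paper proves separately and does not need here. The radial reparametrization $\mu$ you set up, with $\alpha=(1+2\max_k\pair{\lambda_k,V_0W_0})^{-1/2}$, is correct, and once you have it the argument could be finished by citing Baez directly rather than by further composing with $\psi$ and attempting a computation that you do not actually carry out.

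The more serious problem is that the ``Moufang identity'' $(xy)(y^{-1}z)=xz$, which you single out as ``the main subtlety,'' is false in $\O$. Using the Cayley--Dickson presentation $\O=\H\oplus\H\ell$, take $x=i$, $y=j$, $z=\ell$: then $xy=k$ and $y^{-1}z=-j\ell$, so $(xy)(y^{-1}z)=k(-j\ell)=-i\ell$, whereas $xz=i\ell$. Abstractly, $(xy)(y^{-1}z)-xz=-[xy,y^{-1},z]$, and this associator is nonzero in general. Consequently the fibers of the octonionic Hopf map $(V,W)\mapsto(VW,|V|,|W|)$ are \emph{not} of the form $\{(V_0g,\bar gW_0):|g|=1\}$; that set is an $(n-1)$-sphere, but it is not contained in a fiber because $(V_0g)(\bar gW_0)\ne V_0W_0$ generically. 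The actual fiber over a point $(p,r,s)$ with $r,s>0$ is $\{(V,V^{-1}p):|V|=r\}$, parametrized by $V$ alone. In the complex and quaternionic cases your description happens to agree with this by associativity, but for the octonions it breaks down --- which is exactly why the octonionic Hopf bundle $S^{15}\to S^8$ is not a principal bundle. The fiber count should be done via the $V$-parametrization or, more efficiently, by simply quoting Baez as the paper does.
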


\begin{proof}
  By~\cite[Sec.~3.1]{Baez:2002}, the restriction of the map
  \begin{equation}
    \pi\colon \A^{2} \to \A\times\Rge^{2},
    \quad
    (V,W) \mapsto \bigl( V W,|V|,|W| \bigr)
  \end{equation}
  to the unit sphere in~\(\A^{2}\) (and its image under~\(\pi\)) is the Hopf fibration~\(S^{2n-1}\to S^{n}\).
  In~\cite{Baez:2002} the usual Euclidean norm is used, but the conclusion remains valid
  for any other norm, in particular for the norm~\(\|{-}\|_{\infty}\) considered above.
  This proves the claim because
  a point~\((\ZZ,V W,|V|,|W|)\in\phi(\Yge)\)
  is determined by~\((V W,|V|,|W|)\),
  analogously to the case of~\(\Yge\) discussed above.
\end{proof}

We need the following elementary observation:

\begin{lemma}
  \label{thm:pq-xy}
  Let \(p\),~\(q\ge0\). The system of equations
  \begin{equation*}
    x^{2}+y^{2} = p
    \quad\text{and}\quad
    xy = q
  \end{equation*}
  is soluble for~\(x\),~\(y\ge0\)
  if and only if \(p\ge 2q\). The solution is unique up to interchanging \(x\) and~\(y\);
  one has \(x=y\) if and only if~\(p=2q\).
\end{lemma}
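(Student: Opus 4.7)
My plan is to reduce everything to the two identities $(x+y)^{2}=p+2q$ and $(x-y)^{2}=p-2q$, which follow immediately from the defining equations by expanding and substituting $x^{2}+y^{2}=p$ and $xy=q$. The second identity gives simultaneously the necessity of $p\geq 2q$ (since a square of a real number is non-negative) and the ``$x=y$ iff $p=2q$'' clause.

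For sufficiency when $p\geq 2q\geq 0$, I would exhibit the explicit candidate
\[
  x=\tfrac{1}{2}\bigl(\sqrt{p+2q}+\sqrt{p-2q}\bigr),\qquad y=\tfrac{1}{2}\bigl(\sqrt{p+2q}-\sqrt{p-2q}\bigr),
\]
observe that both are non-negative because $\sqrt{p+2q}\geq\sqrt{p-2q}\geq 0$, and verify $x^{2}+y^{2}=p$ and $xy=q$ by direct expansion using $(a+b)^{2}+(a-b)^{2}=2a^{2}+2b^{2}$ and $(a+b)(a-b)=a^{2}-b^{2}$.

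For uniqueness up to interchange, the observation is that any non-negative solution must satisfy $x+y=\sqrt{p+2q}$ (the non-negative square root, since $x+y\geq 0$) and $xy=q$. By Vieta's formulas, the unordered pair $\{x,y\}$ is then determined as the multiset of roots of the fixed quadratic $t^{2}-\sqrt{p+2q}\,t+q=0$.

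The statement is entirely elementary, so I do not anticipate any real obstacle; the only care required is to check non-negativity of the explicit formulas and to use that the sum $x+y$ itself (not merely its square) is pinned down by $p$ and $q$, which in turn relies on the sign hypothesis $x,y\geq 0$.
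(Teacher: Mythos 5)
The paper states this lemma as an ``elementary observation'' and gives no proof, so there is nothing to compare against. Your argument is correct and complete: the identities $(x\pm y)^{2}=p\pm 2q$ give both the necessity of $p\ge 2q$ and the $x=y$ criterion, the explicit formulas in terms of $\sqrt{p\pm 2q}$ establish existence, and the Vieta argument (using that $x+y\ge 0$ pins down $x+y=\sqrt{p+2q}$, not just its square) gives uniqueness of the unordered pair. This is precisely the standard elementary argument the authors presumably had in mind.
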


\begin{proposition}
  \label{thm:p-homeo}
  The map~\(\psi\) is a homeomorphism of~\(\phi(\Yge)\) onto~\(\Sge\).
\end{proposition}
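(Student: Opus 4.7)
The plan is to exhibit $\psi$ as a continuous bijection from the compact space $\phi(\Yge)$ onto the Hausdorff space $\Sge$ by writing down an explicit continuous inverse; since continuous bijections from compact to Hausdorff spaces are homeomorphisms, this suffices. To see that $\psi(\phi(\Yge))\subset\Sge$, I would use the normed-algebra identity $|VW|=|V||W|$ together with the sphere equation~\eqref{eq:def-Y-2} to compute
\[
  \sum_{k=0}^{n}\Bigl(\frac{Z_{k}}{\sqrt{1-2|VW|}}\Bigr)^{\!2}+\Bigl(\frac{|V|-|W|}{\sqrt{1-2|VW|}}\Bigr)^{\!2}=\frac{1-2|V||W|}{1-2|VW|}=1,
\]
while non-negativity of the first $n+1$ coordinates is inherited from~\(\Yge\). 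Continuity of~\(\psi\) is then immediate from Lemma~\ref{thm:max-VW}.

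To invert~\(\psi\) explicitly, I would start from a point $(\aa,b)\in\Sge$. Equation~\eqref{eq:def-Y-1} forces $VW=-\frac{n}{2}\sum_{k}Z_{k}^{2}\lambda_{k}$, and substituting the ansatz $\ZZ=\sqrt{1-2|VW|}\,\aa$ together with $r:=\bigl|\sum_{k}a_{k}^{2}\lambda_{k}\bigr|$ yields the scalar equation $|VW|=\frac{n}{2}(1-2|VW|)\,r$, whose unique solution $|VW|=nr/(2+2nr)$ recovers \(\ZZ\) and \(VW\) as continuous functions of~$(\aa,b)$. Pairing the signed quantity $|V|-|W|=b\sqrt{1-2|VW|}$ with the product $|V||W|=|VW|$ and applying Lemma~\ref{thm:pq-xy} then determines $(|V|,|W|)$ uniquely and continuously.

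It remains to check that the recovered tuple $(\ZZ,VW,|V|,|W|)$ actually lies in $\phi(\Yge)$, i.e.\ equals $\phi(\ZZ,V,W)$ for some $(\ZZ,V,W)\in\Yge$. When $|V|>0$, any $V\in\A$ of the prescribed norm paired with $W:=\bar V\,(VW)/|V|^{2}$ does the job---here alternativity of~\(\O\) is needed to identify $V\cdot W$ with the prescribed $VW$ and to confirm that $|W|$ has the correct value---while the degenerate case reduces to $VW=0$ and is handled by hand. The defining equations of~\(Y\) are then satisfied by construction.

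The main obstacle will be deriving the scalar equation that recovers $|VW|$ from~$\aa$: it relies on feeding~\eqref{eq:def-Y-1} into itself through the normed-algebra identity $|VW|=|V||W|$, and it is this closed-form expression that makes the explicit inverse continuous everywhere on~\(\Sge\). Once it is in hand, the inversion via Lemma~\ref{thm:pq-xy} and the lift back to~\(\A^{2}\) are routine.
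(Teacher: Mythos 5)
Your proof is correct and follows essentially the same route as the paper: derive $|VW|$ from $(\aa,b)$ via the fixed-point equation coming from~\eqref{eq:def-Y-1}, reconstruct $\ZZ$, $VW$ and the pair $(|V|,|W|)$, verify membership in $\phi(\Yge)$ by lifting to $\A^{2}$, and invoke compactness. The only cosmetic deviation is that you recover $(|V|,|W|)$ from the pair $\bigl(|V|-|W|,\,|V|\,|W|\bigr)$ rather than from $\bigl(|V|^{2}+|W|^{2},\,|V|\,|W|\bigr)$ as in the paper---the two pairings carry the same information, and your choice slightly sidesteps the sign ambiguity that Lemma~\ref{thm:pq-xy} is used to manage.
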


\begin{proof}
  We start by verifying that \(\psi\) maps to~\(\Sge\).
  For~\((\ZZ,V,W)\in \Yge\) we have
  \begin{equation}
    \sum_{k}|Z_{k}|^{2}+\bigl(|V|-|W|\bigr)^{2}
    = \sum_{k}|Z_{k}|^{2} + |V|^{2} - 2|VW| + |W|^{2}
    = 1- 2|VW|
  \end{equation}
  by~\eqref{eq:def-Y-2}, which shows \(\psi(\ZZ,V W,|V|,|W|)\in S_{+}\).

  Next we prove that \(\psi\colon \phi(\Yge)\to\Sge\) is surjective.
  Given \((\aa,b)\in\Sge\), we define
  \begin{equation}
    c = \Bigl| \frac{n}{2}\sum_{k}a_{k}^{2}\,\lambda_{k} \Bigr|
  \end{equation}
  and then
  \begin{equation}
    \label{eq:def-q}
    p = \frac{b^{2}+2c}{1+2c}
    \qquad\text{and}\qquad
    q = \frac{c}{1+2c} ,
  \end{equation}
  the latter equations being equivalent to
  \begin{equation}
    b^{2} = \frac{p-2q}{1-2q}
    \qquad\text{and}\qquad
    c = \frac{q}{1-2q}.
  \end{equation}
  
  We have \(q\ge0\) and \(p-2q=b^{2}/(1+2c)\ge0\).
  By Lemma~\ref{thm:pq-xy} there is a solution to the equations
  \(x^{2}+y^{2}=p\), \(xy=q\), unique up to interchanging \(x\) and \(y\).
  Moreover, \(x=y\) if and only \(p=2q\), that is, if and only if \(b=0\).
  If \(b\ne0\), we choose \(x\) and~\(y\) such that \(x-y\) has the same sign as~\(b\).
  Now we set
  \begin{align}
    \ZZ &= \sqrt{1-2xy}\, \aa, \\
    V &= x, \\
    W &= \begin{cases}
      y & \text{if \(x=0\),} \\
      - \frac{1}{x}\cdot \frac{n}{2}\sum_{k}Z_{k}^{2}\,\lambda_{k}
      & \text{otherwise.}
    \end{cases}
  \end{align}
  (Observe that \(1-2xy=1-2q=q/c>0\).)

  We note that \(|W| = y\).
  This is clear if \(x=0\). Otherwise, we have
  \begin{equation}
    |W| = \frac{1-2xy}{x} \cdot \Bigl| \frac{n}{2}\sum_{k}a_{k}^{2}\,\lambda_{k} \Bigr|
    = \frac{(1-2q)c}{x} = \frac{q}{x} = y.
  \end{equation}

  For~\(x\ne0\), equation~\eqref{eq:def-Y-1} is satisfied by construction.
  If \(x=0\), then \(V W=0\), but also \(q=0\) and \(c=0\), which implies \eqref{eq:def-Y-1}.
  Moreover,
  \begin{align}
    \sum_{k}Z_{k}^{2} + |V|^{2} + |W|^{2}
    &= (1-2xy)\sum_{k}a_{k}^{2} + x^{2} + y^{2} \\
    \notag
    &= (1-2q)(1-b^{2}) + p  = 1,
  \end{align}
  where we have used that \((\aa,b)\) has norm~\(1\).
  Hence \((\ZZ,V,W)\in \Yge\).

  Since \(|VW|=xy\), we have
  \begin{equation}
    \aa = \frac{\ZZ}{\sqrt{1-2|VW|}}.
  \end{equation}
  By construction, \(b\) has the same sign as~\(|V|-|W|=x-y\), and
  \begin{equation}
    \label{eq:id-b2}
    \Biggl(\frac{|V|-|W|}{\sqrt{1-2|VW|}}\Biggr)^{2}
    = \frac{(x-y)^{2}}{1-2xy} = \frac{p-2q}{1-2q} = b^{2}.
  \end{equation}
  Thus \(\psi(\ZZ,V W,|V|,|W|)=(\aa,b)\), and \(\psi\) is surjective.

  It remains to show that \(\psi\) is injective.
  Let \((\aa,b)=\psi(\ZZ,V W,|V|,|W|)\). Then
  \begin{align}
    c &= \frac{1}{1-2|VW|}\Bigl| \sum_{k}Z_{k}^{2}\,\lambda_{k} \Bigr|
    = \frac{|V W|}{1-2|VW|}, \\
    b^{2} &= \frac{|V|^{2}+|W|^{2}-2|VW|}{1-2|VW|},
  \end{align}
  hence
  \begin{equation}
    p = |V|^{2}+|W|^{2}
    \qquad\text{and}\qquad
    q = |VW|.
  \end{equation}
  Because we know the sign of~\(|V|-|W|\), both~\(|V|\) and~\(|W|\) are determined by~\((\aa,b)\),
  hence so are the~\(Z_{k}\) and finally also \(V W\).
  This completes the proof.
\end{proof}

\begin{theorem}
  \label{thm:homeo-Z-Y}
  \(Z\) is equivariantly homeomorphic to~\(Y\).
\end{theorem}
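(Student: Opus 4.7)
The plan is to exhibit both \(Y\) and \(Z\) as identification spaces of the form~\(T\times Q/{\sim}\) from~\eqref{eq:T-ident-space} and to construct a partition-preserving homeomorphism between their orbit-space models. The preceding discussion shows that \(\Yge\) is a \(2n\)-ball providing a global cross-section of~\(Y\to Y/T\), so \(Y\) is equivariantly homeomorphic to \(T\times\Yge/{\sim_{Y}}\), where \((g,y)\sim_{Y}(g',y)\) iff \(g^{-1}g'\) lies in the isotropy of~\(y\). A direct inspection of the \(T\)-action shows that the isotropy at~\((\ZZ,V,W)\in\Yge\) is the coordinate subtorus indexed by those~\(k\) with~\(Z_{k}=0\); in particular, the interior of~\(\Yge\) has trivial isotropy, matching the stratum~\(Q_{1}\) used to build~\(Z\).

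Next, I would match the boundary strata. By Lemma~\ref{thm:Q-Hopf} and Proposition~\ref{thm:p-homeo}, the composite \(\psi\circ\phi\colon\partial\Yge\to\partial\Sge\) is (a copy of) the Hopf fibration~\(S^{2n-1}\to S^{n}\), and by construction so is \(\pi\colon\partial Q\to\partial\Sge\). Invoking the uniqueness of the Hopf bundle over~\(S^{n}\) up to fiber-preserving homeomorphism, I would choose \(h_{0}\colon\partial\Yge\to\partial Q\) with \(\pi\circ h_{0}=\psi\circ\phi\). The explicit formula for~\(\psi\circ\phi\) sends \((\ZZ,V,W)\) to a point \((\aa,b)\in\Sge\) whose \(k\)-th coordinate is \(a_{k}=Z_{k}/\sqrt{1-2|VW|}\), so \(a_{k}=0\) iff \(Z_{k}=0\); hence the isotropy at~\(y\in\partial\Yge\) coincides with the isotropy at~\(\psi\circ\phi(y)\in\partial\Sge\). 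Consequently \(h_{0}(y)\in\pi^{-1}((\Sge)_{K})=Q_{K}\) exactly when the isotropy of~\(y\) is~\(K\).

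Finally, I would extend~\(h_{0}\) to a homeomorphism \(h\colon\Yge\to Q\) of \(2n\)-balls by radial coning from the centre. On the interior the extension is automatically partition-preserving (both sides consist of the single trivial-isotropy stratum), so the partition-preserving~\(h\) descends to an equivariant homeomorphism of identification spaces, giving \(Y\approx Z\). The main technical point is the existence of the fiber-preserving boundary map~\(h_{0}\): this should be justified either by invoking the classification of Hopf bundles (two copies of the Hopf fibration over~\(S^{n}\) must be isomorphic as bundles), or by matching explicit trivializations over the two hemispheres of~\(\partial\Sge\). Everything else in the argument is essentially formal.
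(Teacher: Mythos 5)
Your proposal is correct and takes essentially the same route as the paper: both pass to the orbit-space model from~\eqref{eq:T-ident-space}, use Lemma~\ref{thm:Q-Hopf} and Proposition~\ref{thm:p-homeo} to identify \(\psi\circ\phi\colon\partial\Yge\to\partial\Sge\) with the Hopf fibration, and then match the orbit-type partitions on the two \(2n\)-balls. You are merely more explicit than the paper about the last step --- choosing a fiber-preserving boundary homeomorphism covering the identity of \(\partial\Sge\) (which in fact comes for free from the explicit model \((V,W)\mapsto(VW,|V|,|W|)\) underlying both fibrations, so no appeal to an abstract classification is needed) and extending it radially over the ball, a point the paper compresses into the closing sentence of its proof.
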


\begin{proof}
  By what we have said at the beginning of this section,
  it is enough to establish a homeomorphism between~\(\Yge\) and~\(\Zge=Q\)
  that respects the orbit type partitions.
  The partition for~\(Q\) is induced by the one for~\(\Sge\) via the Hopf
  fibration~\(\pi\colon\partial Q\to\partial\Sge\). Similarly,
  we see that the partition for~\(\Yge\) is induced by the one for~\(\phi(\Yge)\),
  and \(\phi\colon\partial \Yge\to\phi(\partial \Yge)\) is again the
  Hopf fibration by Lemma~\ref{thm:Q-Hopf}.
  Hence the claim follows
  by observing that the homeomorphism~\(\psi\) is compatible
  with the partitions for~\(\phi(\Yge)\) and~\(\Sge\).
\end{proof}

\section{The real case}
\label{sec:real}

In~\cite[Sec.~6]{FranzPuppe:2008} Franz--Puppe also considered
a ``real version'' of the mutant~\(Z\), which is a compact orientable smooth
manifold~\(\bZ\) of dimension~\(2n\) with a smooth action of the \(2\)-torus~\(G=\{\pm1\}^{n+1}\).
As a module over the polynomial algebra~\(A=H^{*}(BG;\Z_{2})\),
the \(G\)-equivariant cohomology of~\(\bZ\)
is again of the form~\eqref{eq:HTZ}, up to the grading.
By restricting the variables~\(z_{k}\) and~\(Z_{k}\) in the definitions
of~\(X\) and~\(Y\) to real numbers and the action from~\(T\) to~\(G\),
we also obtain compact orientable \(G\)-manifolds~\(\bX\) and~\(\bY\).

\begin{theorem} For the \(G\)-manifolds~\(\bX\),~\(\bY\) and~\(\bZ\) the following holds:
  \begin{enumerate}
  \item \(\bX\) and~\(\bY\) are equivariantly diffeomorphic.
  \item \(\bZ\) is equivariantly homeomorphic to~\(\bX\) and~\(\bY\).
  \end{enumerate}
\end{theorem}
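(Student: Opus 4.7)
The plan is to transfer the arguments of Sections~2 and~3 to the real setting with only minor modifications. The key ingredients used there---the division algebra structure of~\(\A\), the spanning property of the~\(\lambda_{k}\) over~\(\R\), and the explicit change of variables---do not depend on whether the ``\(Z\)''-variables are real or complex.

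For part~(1), I would first establish the real analog of Proposition~\ref{thm:Y-mf}, namely that the origin is a regular value of~\(\FF\) restricted to~\(\R^{n+1}\times\A^{2}\). The argument goes through verbatim: if \((V,W)\ne(0,0)\) then surjectivity of the derivative onto~\(\A\) comes from varying only the \(\A^{2}\)-directions (left, resp.\ right, multiplication by a nonzero element of~\(\A\) is an \(\R\)-linear automorphism), while if \((V,W)=(0,0)\) and \(\ZZ\ne0\), Lemma~\ref{thm:lambda-lin} applied to \(c_{k}=|Z_{k}|^{2}\) forces all~\(Z_{k}\) to be nonzero, after which surjectivity in the \(\ZZ\)-directions reduces to the spanning property of the~\(\lambda_{k}\). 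Next, the diffeomorphism of Theorem~\ref{thm:diffeo-X-Y}, given by \((\ZZ,V,W)\mapsto(\sqrt{n+1}\,\ZZ,\bar V\bflambda+W\,\bfone)\), sends~\(\R^{n+1}\times\A^{2}\) into~\(\R^{n+1}\times\A^{n+1}\) and is manifestly \(G\)-equivariant. Its restriction to~\(\bY\) therefore yields a \(G\)-equivariant diffeomorphism \(\bY\to\bX\), simultaneously showing that \(\bX\) is a closed \(G\)-submanifold.

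For part~(2), the strategy is to reuse the homeomorphism \(\psi\circ\phi\colon\Yge\to\Sge\) from Section~\ref{sec:homeo-Z}. The crucial observation is that the orbit space \(\bY/G\) can again be identified with~\(\Yge\): every \(G\)-orbit in~\(\bY\) has a unique representative with \(Z_{k}\ge0\), and such a representative automatically lies in~\(\Yge\subset Y\). The orbit-type stratification coincides with the one carried by~\(\Yge\) when it is viewed as~\(Y/T\), once we identify the coordinate subtorus \((S^{1})^{I}\subset T\) with the corresponding subgroup \(\{\pm1\}^{I}\subset G\); in both cases the isotropy of a point is determined by the set \(I=\{k\colon Z_{k}=0\}\). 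An entirely analogous discussion applies to~\(\bX\). It then remains to check that \(\bZ\) from~\cite[Sec.~6]{FranzPuppe:2008} is itself an identification space of the form~\eqref{eq:T-ident-space}, with \(G\) in place of~\(T\), built from the \emph{same} ball~\(Q\) and the \emph{same} partition (under the \((S^{1})^{I}\leftrightarrow\{\pm1\}^{I}\) correspondence) used for~\(Z\). I expect this to be the main obstacle, though largely a bookkeeping matter; granting it, the fact that \(\psi\circ\phi\) respects the partitions and realises the Hopf fibration on the boundary (Lemma~\ref{thm:Q-Hopf}) translates directly into equivariant homeomorphisms \(\bZ\approx\bY\approx\bX\), completing the proof.
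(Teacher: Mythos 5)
Your proposal is correct and follows the same route as the paper, which simply states that ``the proof is completely analogous to the complex case'' and leaves the details to the reader; you have fleshed out exactly what that analogy requires. The paper also sketches a one-line alternative---observing that the maps constructed in the complex case commute with the canonical involutions (complex conjugation on the \(Z_{k}\)-, \(z_{k}\)-variables and the corresponding involution on \(Z\)) whose fixed-point sets are \(\bX\), \(\bY\), \(\bZ\)---which avoids repeating the arguments but is not the path you chose.
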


The proof is completely analogous to the ``complex case''.
Alternatively, one can check that the maps constructed in the complex case
commute with the canonical involutions on~\(X\),~\(Y\) and~\(Z\), whose fixed points are
exactly \(\bX\),~\(\bY\) and~\(\bZ\).

The following observation was made
in~\cite[Sec.~7]{FranzPuppe:2008} for~\(\bZ\); % \(\approx\bY\):
in the next section we will extend it to the complex case.

\begin{lemma}
  \label{pi-1-bar-Y}
  \(\bY\) is simply connected for~\(n>1\).
\end{lemma}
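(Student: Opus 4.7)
The plan is to reduce the claim to showing that \(\bX\) is simply connected via the real version of Theorem~\ref{thm:diffeo-X-Y}, whose proof (as noted at the start of Section~\ref{sec:real}) is completely analogous to the complex case, and then to exploit the description of \(\bX\) as a branched cover of a convex \(2n\)-ball.

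Let \(G=\{\pm 1\}^{n+1}\). The quotient \(\bX/G\) identifies with the convex region \(B=\{\uu\in\Span(\bflambda,\bfone):|u_k|\le 1\text{ for all }k\}\), a \(2n\)-ball, via the projection \(\pi\colon\bX\to B\), \((\zz,\uu)\mapsto\uu\). This is a \(2^{n+1}\)-fold cover branched along the codimension-one faces \(\partial_{k}B=B\cap\{|u_k|=1\}\), with monodromy around \(\partial_{k}B\) equal to the generator \(e_k\in G\) that flips the \(k\)-th sign. I would prove \(\pi_1(\bX)=1\) by showing that every based loop \(\gamma\) in \(\bX\) is nullhomotopic. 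After a generic perturbation and choice of basepoint over~\(B^{\circ}\), I may assume that \(\pi\circ\gamma\) meets the branch locus transversely only in the smooth open parts of individual faces; recording the sequence of crossings gives a word \(w=e_{k_1}\cdots e_{k_m}\) whose image in \(G\) is trivial, so every generator appears an even number of times. This word can then be reduced to the empty word by two elementary moves, each realized by a homotopy of \(\gamma\) in \(\bX\): \emph{commutation} of adjacent letters \(e_k,e_l\) with \(k\ne l\), realized by deforming the relevant sub-arc through a point of the non-empty codimension-two stratum \(\partial_{k}B\cap\partial_{l}B\); and \emph{cancellation} of two consecutive occurrences of the same \(e_k\), realized by sliding the two crossings together along \(\partial_{k}B\) until they coalesce and pinch off. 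Once \(w\) is empty, \(\gamma\) is contained in a single contractible sheet and contracts there.

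The main technical input is the connectivity of the faces, which is precisely where the hypothesis \(n>1\) enters. Using the expansion \(|V\lambda_k+W|^{2}=|V|^{2}+2\pair{\lambda_{k},\bar V W}+|W|^{2}\), one checks that \(\partial_{k}B\) fibers via \(\uu\mapsto u_k\) over the unit sphere of \(\A\) with convex fibers, so \(\partial_{k}B\simeq S^{n-1}\), which is connected precisely when \(n\ge 2\); for \(n=1\) it is \(S^0\), the cancellation move fails, and indeed \(\bY\) is then a torus. A secondary point worth mentioning, since it rules out the most obvious alternative approach, is that the deepest intersection \(\bigcap_{k}\partial_{k}B\) is itself disconnected: subtracting the defining equations \(|V\lambda_{k}+W|^{2}=1\) for varying \(k\) together with Lemma~\ref{thm:lambda-lin} forces \(\bar V W=0\) on this set, which then splits as \(\{V=0,|W|=1\}\sqcup\{W=0,|V|=1\}\cong S^{n-1}\sqcup S^{n-1}\). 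This disconnection obstructs a naive Seifert--van Kampen argument on the cover of \(\bX\) by its \(2^{n+1}\) sheets, but the word-reduction plan sketched above uses only pairwise face intersections and so sidesteps the difficulty.
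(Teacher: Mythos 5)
Your proposal takes a genuinely different route from the paper's. The paper covers $\bY$ by the $G$-translates $g\Yge$ of the contractible fundamental domain and applies Seifert--van Kampen iteratively, adjoining one translate at a time. The subtlety you correctly flag --- that the deepest face $\bY^{G}=g\Yge\cap(-g)\Yge$ is a disconnected $S^{0}\times S^{n-1}$ --- is handled there not by abandoning Seifert--van Kampen but by choosing the gluing order: starting with two non-antipodal translates, every subsequent intersection $(g_{1}\Yge\cup\dots\cup g_{k}\Yge)\cap h\Yge$ with $k\ge 2$ contains some \emph{connected} $g_{i}\Yge\cap h\Yge$ through which the two components of $\bY^{G}$ are always joined, so each gluing stage has connected overlap. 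Your argument instead works directly in the branched cover $\bX\to B$, reducing the word of face crossings by commutation (via nonempty codimension-two strata) and cancellation (via connectedness of each codimension-one face $\partial_{k}B$). Both routes rest on the same geometric input --- that the face $\bZ^{L}/M\cong D^{m}\times S^{n-1}$, $m>0$, is connected precisely when $n\ge 2$, cf.\ Remark~\ref{rem:Z-fixed-points} --- and both correctly isolate why $n>1$ is needed, your $\partial_{k}B\simeq S^{n-1}$ and deepest-stratum computation ($\bar V W=0$ forcing a split into $V=0$ or $W=0$) matching the paper's. What the paper's route buys is a clean application of a standard theorem once the connectivity lemma is in place; what yours buys is a more explicit geometric picture of loops, essentially an orbifold-$\pi_{1}$ computation. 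As a proof \emph{plan} it is sound, but to be complete the two moves would need to be substantiated as actual homotopies in $\bX$ lifting generic homotopies in $B$, which requires a bit of care with the stratification of $\partial B$ (in particular the cancellation move needs the arc between consecutive crossings, not merely the crossing points, to be pushed across $\partial_{k}B$ without encountering other strata, which is where genericity and the connectivity of the open top stratum of $\partial_{k}B$ enter).
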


\begin{proof}
  Since \(\Yge\) is a fundamental domain for the \(G\)-action on~\(\bY\),
  the translates~\(g\Yge\), \(g\in G\), cover \(\bY\).
  For~\(g\ne h\), the intersection~\(g\Yge\cap h\Yge\) is the subspace of~\(g\Yge\)
  obtained by setting \(Z_{k}=0\) for all~\(k\) such that \(g_{k}\ne h_{k}\).
  This intersection contains \(\bY^{G}\ne\emptyset\) and
  it is connected unless \(g=(-1,\dots,-1)\cdot h\), see Remark~\ref{rem:Z-fixed-points}
  (or Lemma~\ref{thm:one-lambda} below for an alternative proof).
  It follows that
  \begin{equation}
   \bigl( g_{1}\Yge\cup\dots\cup g_{k}\Yge \bigr) \cap h\Yge
  \end{equation}
  is connected for~\(k\ge2\) and pairwise distinct \(g_{1}\),~\dots,~\(g_{k}\),~\(h\in G\). 

  We also know that \(\Yge\) is contractible.
  The Seifert--van Kampen theorem now implies
  that if we build up \(\bY\) from~\(\Yge\) by adjoining one~\(g\Yge\) after the other,
  we will always glue together two simply connected spaces along a connected subspace
  and therefore obtain another simply connected space,
  provided that we start with some~\(g\ne(-1,\dots,-1)\) in the first step.
\end{proof}

\section{Connected sums of products of spheres}

Franz--Puppe~\cite{FranzPuppe:2008} have computed
the integer cohomology of the mutants:
In the complex case one gets that \(Z\) has the same integer homology
as the connected sum of products of spheres
\begin{equation}
  \label{eq:sum-complex-case}
  \mathop\#_{k=1}^{\ceil{n/2}} \mathop\#_{\binom{n+1}{k}} S^{n+k}\times S^{2n+1-k},
\end{equation}
where \(\ceil{n/2}\) denotes the least integer greater or equal to~\(n/2\)
(that is, equal to~\(1\) for~\(n=1\) and to~\(n/2\) otherwise).
In the real case, \(\bZ\) has the same integer homology as
\begin{equation}
  \label{eq:sum-real-case}
  \mathop\#_{2^{n}-1} \! S^{n}\times S^{n}.
\end{equation}

\begin{theorem}
  \label{thm:connected-sums}
  In the complex case, \(Y\) is diffeomorphic to~\eqref{eq:sum-complex-case}.
  In the real case, \(\bY\) is diffeomorphic to~\eqref{eq:sum-real-case}.
\end{theorem}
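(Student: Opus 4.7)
The plan is to convert the theorem into a statement about the diffeomorphism type of an intersection of real quadrics, then invoke the classical machinery of that subject. By Theorem~\ref{thm:diffeo-X-Y} together with its real analogue from Section~\ref{sec:real}, it suffices to identify the diffeomorphism type of~\(X\) in the complex case and of~\(\bX\) in the real case. Both are defined by the \(n+1\) equations \(|z_{k}|^{2}+|u_{k}|^{2}=1\) inside a real vector space, once we parametrise \(\Span(\bflambda,\bfone)\) by~\((V,W)\in\A^{2}\) via Lemma~\ref{thm:plane-VW}.

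For the real case, I would substitute \(\uu=\bar V\bflambda+W\,\bfone\) and expand \(|u_{k}|^{2}=|V|^{2}+|W|^{2}+2\pair{\lambda_{k},VW}\) as in~\eqref{eq:uk2}. The result is an intersection of \(n+1\) real quadrics in~\(\R^{3n+1}\) whose coefficient vectors form, by Lemma~\ref{thm:lambda-lin}, the regular-simplex configuration with sole linear relation~\(\sum_{k}\lambda_{k}=0\). This is precisely the Gale diagram that appears in López de Medrano's classification of transverse intersections of concordant real quadrics (the same framework as~\cite{GomezLopezDeMedrano:2014}), and it identifies the resulting manifold as \(\mathop\#_{2^{n}-1}S^{n}\times S^{n}\).

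For the complex case, the standard \((S^{1})^{n+1}\)-action on the coordinates~\(Z_{k}\) turns \(Y\) into the ``complex moment-angle manifold'' associated with the same Gale data. I would invoke the complex version of the quadric classification from~\cite{GomezLopezDeMedrano:2014}, whose output enumerates the summands by the subsets~\(S\subseteq\{0,\dots,n\}\) of each cardinality~\(k\), producing exactly the binomial coefficients~\(\binom{n+1}{k}\) and degree pairs \((n+k,2n+1-k)\) appearing in formula~\eqref{eq:sum-complex-case}. The integer-homology computation of Franz--Puppe recalled above would then serve as a consistency check, matching the decomposition term by term.

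The main obstacle is transferring this classification—standardly formulated for real (or complex) variables—to our setting, where the auxiliary variables \(V\) and \(W\) take values in a general normed division algebra. The saving observation is that after Theorem~\ref{thm:diffeo-X-Y} has been applied, the only information carried by~\(\A\) that enters the defining equations is the real dimension~\(n\) and the inner-product matrix~\(\pair{\lambda_{k},\lambda_{l}}\) from~\eqref{eq:cond-lambda}; non-associativity of the octonions never appears. If this route proved too indirect, a fallback would be to build the handle decomposition by hand: the function \((\ZZ,V,W)\mapsto|V|-|W|\) (which already underlies the construction of~\(\psi\) in Section~\ref{sec:homeo-Z}) together with Lemma~\ref{thm:pq-xy} should exhibit the critical submanifolds explicitly, and the resulting handle pattern can be compared directly with the standard handle decomposition of the claimed connected sum.
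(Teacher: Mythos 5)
Your plan leans on the Gale-diagram/moment-angle classification of intersections of \emph{diagonal} real quadrics, but after the substitution~\(\uu=\bar V\bflambda+W\,\bfone\) the defining system is not diagonal. The \(k\)-th equation reads
\begin{equation*}
  |z_{k}|^{2} + |V|^{2} + |W|^{2} + 2\pair{\lambda_{k},VW} = 1,
\end{equation*}
and the cross term~\(\pair{\lambda_{k},VW}\) is a genuinely non-diagonal quadratic form in the \(2n\)~real coordinates of~\((V,W)\); for the system to fall under López de Medrano's classification one would need to diagonalize all \(n+1\)~of these forms simultaneously along with~\(|V|^{2}+|W|^{2}\). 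That works for~\(n\le2\) (for~\(\A=\C\) the paper notes the unitary substitution turning \(VW\) into \((V^{2}+W^{2})/2\), which depends on commutativity), but for~\(\H\) and~\(\O\) there is no such change of coordinates, and this is exactly why the paper abandons the classification route at~\(n=4,8\). Your ``saving observation'' that the equations depend only on~\(n\) and the Gram matrix~\(\pair{\lambda_{k},\lambda_{l}}\) is false: the bilinear map~\((V,W)\mapsto VW\) enters as well, and it is not determined by the inner product. You also never address simple connectivity or the dimension restriction needed for any such recognition theorem, whereas the paper establishes \(\pi_{1}(\bY)=1\) (Lemma~\ref{pi-1-bar-Y}) precisely so that Smale/Kosinski can be applied.

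The paper's actual route at~\(n=4,8\) is different from both your main plan and your ``fallback.'' For the real case it does not build an explicit handle decomposition from a Morse function; instead it constructs a bounding manifold~\(L\) with~\(\partial L=\bY\) (by adding one real variable~\(Z_{n+1}\ge0\) with a repeated~\(\lambda\)), shows \(L\) is stably parallelizable, simply connected, and has the homology of a wedge of \(n\)-spheres, and then invokes Kosinski's recognition theorem for \((2n,n)\)-handlebodies to conclude \(L\) is a boundary-connected sum of~\(D^{n+1}\times S^{n}\), so \(\bY=\partial L\) is a connected sum of~\(S^{n}\times S^{n}\). The passage to the complex case then uses the Gitler--López de Medrano ``doubling a variable'' mechanism (adapted to allow the extra polynomial term~\(VW\)), together with the observation that the new manifold is the double of a simply connected manifold with connected boundary. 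To repair your proposal you would need to replace the Gale-diagram step by some argument that actually handles the non-diagonal quadric, and the handlebody/surgery argument of the paper is the natural candidate.
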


Homeomorphisms of this type were already established in~\cite[Sec.~7]{FranzPuppe:2008}
for~\(Z\) if~\(n\le 2\) and for~\(\bZ\) if~\(n\le 4\).

\begin{proof}
  For~\(n=1\) this is a restatement of the definitions of~\(X\) and~\(\bX\).
  The case~\(n=2\) is a special case of a result
  of Gómez Gutiérrez and López de Medrano~\cite[Main Thm.]{GomezLopezDeMedrano:2014}:
  A unitary change of coordinates replaces \(VW\) by~\((V^{2}+W^{2})/2\);
  the denominator can be absorbed by the~\(\lambda_{k}\)'s.
  The resulting equations are of the form considered in~\cite[p.~240]{GomezLopezDeMedrano:2014},
  where they are written as real polynomials in real variables.
  
  For~\(n=4\) and~\(n=8\) we look at the real case first.
  Then \(\bY\) is of dimension at least \(8\), and simply connected by Lemma~\ref{pi-1-bar-Y}.
  We introduce a new variable~\(Z_{n+1}\) and consider the set of solutions~\(L\)
  of the equations
  \begin{align}
    \label{eq:def-L-1}
    \frac{n}{2}\sum_{k=0}^{n}|Z_{k}|^{2}\lambda_{k} + V W &= 0, \\
    \label{eq:def-L-2}
    \sum_{k=0}^{n+1}|Z_{k}|^{2} + |V|^{2} + |W|^{2} &= 1, \\
    \shortintertext{plus the inequality}
    Z_{n+1} &\ge 0
  \end{align}
  with \(V\),~\(W\in\A\), \(Z_{0}\),~\dots,~\(Z_{n+1}\in\R\) and \(\lambda_{n+1}=\lambda_{0}\).
  Analogous to the discussion
  in~\cite[p.~1505]{GitlerLopezDeMedrano:2013}, we get that \(L\)
  is a manifold with boundary~\(\bY\) and that the inclusion~\(\bY\hookrightarrow L\)
  is a homotopy retraction. Hence \(L\) is also simply connected
  and has the homology of a wedge of \(n\)-spheres.
  As \(L\) is stably parallelizable by construction,
  this implies that it is a \((2n,n)\)-handlebody \cite[Thm.~VIII.4.8]{Kosinski:1993}
  and therefore the boundary-connected sum of copies of~\(D^{n+1}\times S^n\)
  \cite[top of p.~188]{Kosinski:1993}.
  Hence its boundary~\(\bY\) is the connected sum of copies of~\(S^n\times S^n\).
  The number of summands follows from the known Betti numbers.

  We reduce the complex to the real case by another argument from~\cite{GitlerLopezDeMedrano:2013}:
  One can think of the equations for~\(Y\) as being obtained from those for~\(\bY\)
  by introducing another~\(n+1\) real variables~\(Z_{n+1}\),~\dots,~\(Z_{2n+1}\)
  with the same set of~\(\lambda_{k}\)'s,
  \cf~\cite[p.~1506]{GitlerLopezDeMedrano:2013}.
  Now if a simply connected intersection of quadrics of dimension at least~\(5\)
  is isomorphic to a connected sum of
  products of two spheres, then introducing a new variable~\(Z_{k}\) with a~\(\lambda_{k}\)
  that is already present leads again to a simply connected manifold
  which is a connected sum of products of two spheres.
  This is proven in~\cite[Thm.~1.1]{GitlerLopezDeMedrano:2013} for the
  ``diagonal case''
  \begin{equation}
    \sum_{k}|Z_{k}|^{2}\lambda_{k} = 0
    \qquad\text{and}\qquad
    \sum_{k}|Z_{k}|^{2} = 1,
  \end{equation}
  but the argument remains valid if an additional polynomial map
  (\(VW\) in our case) is present from the outset,
  \cf~\cite[Sec.~4.1~\&~p.~254]{GomezLopezDeMedrano:2014}.
  That the new manifold is simply connected follows from the fact that it is the double
  of a simply connected manifold with connected boundary.
  (In the case of~\(\bY\) this would be the manifold~\(L\) mentioned above.)
  Since we already know \(\bY\) to be a connected sum of products of spheres,
  it follows that this is true also for~\(Y\). The number and form of the summands
  is again determined by the homology.
\end{proof}

\section{A generalization}
\label{sec:generalization}

For any~\(m\ge0\) and any collection~\(\bflambda=(\lambda_{1},\dots,\lambda_{m})\in\A^{m}\) one can define
a real algebraic variety~\(Y(\bflambda)\subset\C^{m}\times\A^{2}\) by
\begin{align}
  \label{eq:def-YY-1}
  \sum_{k=1}^{m}|Z_{k}|^{2}\lambda_{k} + V W &= 0, \\
  \label{eq:def-YY-2}
  \sum_{k=1}^{m}|Z_{k}|^{2} + |V|^{2} + |W|^{2} &= 1,
\end{align}
where \(Z_{1}\),~\dots,~\(Z_{m}\in\C\) and \(V\),~\(W\in\A\).
The torus~\(T=(S^{1})^{m}\) acts on~\(Y(\bflambda)\) in the same way as before.
An argument as in the proof of Proposition~\ref{thm:Y-mf} shows
that \(Y(\bflambda)\) is a smooth \(T\)-manifold (of dimension~\(2m+n-1\))
if \(\bflambda\) satisfies the \emph{weak hyperbolicity condition},
\cf~\cite[Sec.~0.1]{GitlerLopezDeMedrano:2013}:
The origin must not be contained in the convex hull of \(n\)~or fewer vectors~\(\lambda_{k}\).
It follows from an equivariant version of the Ehresmann fibration theorem
that if one varies \(\bflambda\),
the equivariant diffeomorphism type of~\(Y(\bflambda)\) does not change
as long as~\(\bflambda\) remains weakly hyperbolic.
In particular, if \(m=n+1\), then \(Y(\bflambda)\) is equivariantly diffeomorphic to
the~\(Y\) defined in the introduction
whenever \(\bflambda\) spans an \(n\)-simplex containing the origin in its interior.
(In particular, the factor~``\(n/2\)'' appearing in~\eqref{eq:def-Y-1} is irrelevant.)

The following observation strengthens and generalizes Remark~\ref{rem:Z-fixed-points}.

\begin{lemma}
  \label{thm:one-lambda}
  Let \(\bflambda\) be weakly hyperbolic.
  If the origin is not contained in the convex hull of~\(\bflambda\),
  then there is an equivariant diffeomorphism
  \begin{equation*}
    Y(\bflambda) \cong S^{2m}\times S^{n-1}
  \end{equation*}
  where \(T\) acts in the standard fashion on the one-point compactification~\(S^{2m}\) of~\(\C^{m}\).
\end{lemma}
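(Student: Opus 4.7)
The plan is to reduce, by a deformation argument based on the Ehresmann principle recalled just before the statement, to the special case where all~\(\lambda_k\) coincide with a single unit vector, and then to exhibit an explicit equivariant trivialization in that case.

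\textbf{Reduction to the constant configuration.}
Since the origin lies outside the convex hull of the~\(\lambda_k\), there exists a unit vector~\(v\in\A\) with~\(\pair{\lambda_k,v}>0\) for every~\(k\). For the linear homotopy \(\lambda_k(t)=(1-t)\lambda_k+tv\), \(t\in[0,1]\), one has
\begin{equation*}
 \pair{\lambda_k(t),v} \,=\, (1-t)\pair{\lambda_k,v}+t \,>\, 0,
\end{equation*}
so the whole family lies in the open half-space \(\{\,w\in\A:\pair{w,v}>0\,\}\) and is in particular weakly hyperbolic at every~\(t\). The equivariant Ehresmann principle then yields an equivariant diffeomorphism \(Y(\bflambda)\cong Y(\bflambda^{*})\) with \(\bflambda^{*}=(v,\dots,v)\).

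\textbf{Trivialization in the constant case.}
For \(\bflambda^{*}\) the defining equations reduce to \(VW=-|\ZZ|^{2}v\) and \(|\ZZ|^{2}+|V|^{2}+|W|^{2}=1\), which together with \(|V|^{2}+|W|^{2}\ge 2|VW|\) force \(|\ZZ|^{2}\le 1/3\). I would define a map \(\mathcal{T}\colon S^{2m}\times S^{n-1}\to Y(\bflambda^{*})\), where \(S^{2m}\subset\C^{m}\times\R\) and \(S^{n-1}\subset\A\) are the unit spheres, by sending \((\aa,b,\xi)\) to~\((\ZZ,V,W)\) with
\begin{align*}
 \ZZ &= \frac{\aa}{\sqrt{1+2|\aa|^{2}}}, &
 V &= |V|\,\xi, &
 W &= -|W|\,(\bar\xi\,v),
\end{align*}
where the non-negative scalars \(|V|\) and \(|W|\) are prescribed by \(|V||W|=|\aa|^{2}/(1+2|\aa|^{2})\) and \(|V|-|W|=b/\sqrt{1+2|\aa|^{2}}\) (the derivation of the second formula uses \((|V|-|W|)^{2}=1-3|\ZZ|^{2}\)). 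Alternativity gives \(\xi(\bar\xi v)=(\xi\bar\xi)v=v\), hence \(VW=-|V||W|v=-|\ZZ|^{2}v\); the norm identity is obtained by expanding \((|V|\pm|W|)^{2}\) and using \(|\aa|^{2}+b^{2}=1\).

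\textbf{Inverse and equivariance.}
Given \((\ZZ,V,W)\in Y(\bflambda^{*})\), I would invert \(\mathcal{T}\) by setting \(\aa=\ZZ/\sqrt{1-2|VW|}\), \(b=(|V|-|W|)/\sqrt{1-2|VW|}\), and then taking \(\xi=V/|V|\) where \(V\ne 0\), respectively \(\xi=-v\bar W/|W|\) where \(W\ne 0\). The two formulas agree on their overlap because \(VW=-|V||W|v\) together with alternativity (for the subalgebra generated by~\(\xi\) and~\(v\)) yields \(V/|V|=-v\bar W/|W|\), and the bound \(|V|^{2}+|W|^{2}\ge 1-|\ZZ|^{2}\ge 2/3\) guarantees that at least one of the two expressions is defined at every point. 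Finally, the torus~\(T=(S^{1})^{m}\) acts on~\(\aa\) coordinatewise and preserves all of~\(|\aa|\), \(|V|\), \(|W|\), \(b\) and~\(\xi\); this matches the standard \(T\)-action on \(S^{2m}=\C^{m}\cup\{\infty\}\) and the trivial action on~\(S^{n-1}\), giving the claimed equivariant diffeomorphism.

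The main obstacle I anticipate is the careful handling of the two poles \(V=0\) and \(W=0\) of~\(S^{2m}\), where one has to switch between the two expressions for~\(\xi\) and verify smoothness, together with the systematic use of alternativity whenever products of three octonions appear.
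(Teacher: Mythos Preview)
Your reduction step coincides with the paper's: both deform all~\(\lambda_{k}\) to a single vector and invoke the equivariant Ehresmann principle recalled just before the lemma. The difference lies in how the constant case is handled.

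The paper does not write down a direct trivialization. It first treats \(\A=\R\): for~\(m=1\) the manifold~\(Y_{\R}(\bflambda^{*})\) is the fixed-point set of a circle in the~\(Y\) of the introduction, so Theorem~\ref{thm:diffeo-X-Y} gives \(Y_{\R}(\bflambda^{*})\cong S^{2}\times S^{0}\); the same formulas extend to~\(m>1\). Then, for general~\(\A\), it exhibits the surjection
\[
  Y_{\R}(\bflambda^{*})\times S_{\A}\longrightarrow Y_{\A}(\bflambda^{*}),
  \qquad (\ZZ,V,W,a)\longmapsto(\ZZ,Va^{-1},aW),
\]
a two-to-one map whose quotient identifies \(Y_{\A}(\bflambda^{*})\) with \((S^{2m}\times S^{0})\times_{\tau}S^{n-1}=S^{2m}\times S^{n-1}\).

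Your argument is more self-contained: it does not appeal to Theorem~\ref{thm:diffeo-X-Y} and treats all~\(n\) uniformly by writing down~\(\mathcal T\) and its inverse explicitly. The cost is the pole analysis you flag. That analysis does go through: although \((\ZZ,V,W)\mapsto|V|\) is not smooth at~\(V=0\), near that locus one has \(|V|<|W|\), so one may use \(\aa\) alone as a chart on the southern hemisphere of~\(S^{2m}\), and smoothness of the inverse reduces to smoothness of \(\aa=\ZZ/\sqrt{1-2|VW|}\) and of \(\xi=-v\bar W/|W|\). The alternativity checks you list (\(\xi(\bar\xi v)=v\) and \((VW)\bar W=V|W|^{2}\)) are exactly what is needed. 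In short, the paper's route packages the pole issue into a \(\Z/2\)-quotient, whereas yours makes the diffeomorphism explicit at the price of a local verification.
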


\begin{proof}
  One can deform all~\(\lambda_{k}\) to the same vector, say \(1/2\in\R\).
  If \(n=m=1\), then \(Y(\bflambda)\) is the fixed point set of a circle
  in the~\(Y\) defined by~\eqref{eq:def-Y-1} and~\eqref{eq:def-Y-2}.
  The claimed equivariant diffeomorphism with~\(S^{2}\times S^{0}\) thus
  follows from the equivariant diffeomorphism between \(Y\) and~\(X=S^{2}\times S^{2}\).
  Looking at the definition of the latter diffeomorphism
  in Theorem~\ref{thm:diffeo-X-Y}\,\eqref{p2},
  we see that it extends to the case~\(m>1\).
  (Just replace \(Z_{1}^{2}\) by~\(Z_{1}^{2}+\dots+Z_{m}^{2}\).)
  We also note that the non-trivial involution~\(\tau\) of~\(S^{0}\)
  corresponds to swapping \(V\) and~\(W\) (which cannot be equal).

  Now we compare the case~\(n=1\) with the general case. We write the two
  \(T\)-manifolds as~\(Y_{\R}(\bflambda)\) and \(Y_{\A}(\bflambda)\), and \(S_{\A}\subset\A\)
  for the elements of norm~\(1\). Because all (identical)~\(\lambda_{k}\)
  are real, the \(T\)-equivariant map
  \begin{equation}
    Y_{\R}(\bflambda)\times S_{\A}\to Y_{\A}(\bflambda),
    \qquad
    (\ZZ,V,W,a) \mapsto (\ZZ,V a^{-1},a W)
  \end{equation}
  is surjective, and it identifies \((\ZZ,V,W,a)\) with~\((\ZZ,W,V,-a)\).
  Hence
  \begin{equation}
    Y_{\A}(\bflambda) \cong Y_{\R}(\bflambda)\times_{\tau} S_{\A}
    \cong (S^{2m}\times S^{0}) \times_{\tau} S^{n-1} = S^{2m}\times S^{n-1}.
    \qedhere
  \end{equation}
\end{proof}

We can now show that the \(T\)-manifolds~\(Y(\bflambda)\) do not
produce examples of higher non-free syzygies in equivariant cohomology:

\begin{proposition}
  \label{thm:HT-Y-lambda}
  Let \(\bflambda\in\A^{m}\) be weakly hyperbolic, and assume \(n>1\).
  \begin{enumerate}
  \item If the origin is not contained in the convex hull of~\(\bflambda\),
    then \(\HT^{*}(Y(\bflambda);\R)\) is free over~\(A=H^{*}(BT;\R)\).
  \item Otherwise, \(\HT^{*}(Y(\bflambda);\R)\) is torsion-free over~\(A\), but not reflexive.
  \end{enumerate}
\end{proposition}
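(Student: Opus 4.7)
\textbf{Part~(1).} This is immediate from Lemma~\ref{thm:one-lambda}, which provides an equivariant diffeomorphism $Y(\bflambda)\cong S^{2m}\times S^{n-1}$ with $T$ acting standardly on $S^{2m}$ (the one-point compactification of $\C^{m}$) and trivially on $S^{n-1}$. Since $(S^{2m})^{T}=\{0,\infty\}$ has total Betti number matching that of $S^{2m}$, the first factor is equivariantly formal and $\HT^{*}(S^{2m};\R)$ is free over~$A$. The equivariant Künneth formula combined with the trivial action on $S^{n-1}$ then gives $\HT^{*}(Y(\bflambda);\R)$ free over~$A$.

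\textbf{Part~(2), structural setup and torsion-freeness.} All isotropy subgroups of the $T$-action on $Y(\bflambda)$ are coordinate subtori $(S^{1})^{S}$, and each corresponding fixed set is again of our type,
\[
Y(\bflambda)^{(S^{1})^{S}}\;=\;Y(\bflambda|_{S^{c}}),
\]
weakly hyperbolic for the sub-collection $\bflambda|_{S^{c}}$. In particular $Y(\bflambda)^{T}=S^{n-1}\sqcup S^{n-1}$, corresponding to the two poles $V=0$ and $W=0$. By the Chang--Skjelbred criterion, torsion-freeness of $\HT^{*}(Y(\bflambda);\R)$ is equivalent to injectivity of the restriction $\HT^{*}(Y(\bflambda);\R)\to\HT^{*}(Y(\bflambda)^{T};\R)$. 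I would establish this by induction on~$m$, using the Atiyah--Bredon spectral sequence built from the smaller $\HT^{*}(Y(\bflambda|_{S^{c}}))$: each such summand is either free (by Part~(1), if $0\notin\mathrm{conv}(\bflambda|_{S^{c}})$) or torsion-free (by the inductive hypothesis), yielding exactness at position~$0$.

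\textbf{Part~(2), non-reflexivity.} To show non-reflexivity, I would compare with the mutant~$Y$ of the introduction, for which $\HT^{*}(Y;\R)$ is given by~\eqref{eq:HTZ} and contains the non-reflexive summand $\m[n-1]$ (the maximal ideal~$\m$ is never reflexive over a polynomial ring in $\ge2$ variables, and here $A$ has $m\ge n+1\ge 3$ variables). By Carathéodory's theorem combined with weak hyperbolicity, there is a subset $S'\subset\{1,\dots,m\}$ of size exactly~$n+1$ with $0\in\mathrm{conv}(\bflambda|_{S'})$, and a deformation within the weakly hyperbolic locus lets us assume $\bflambda|_{S'}$ is the symmetric simplex. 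The fixed set $Y(\bflambda)^{(S^{1})^{(S')^{c}}}$ is then equivariantly diffeomorphic to~$Y$, and the plan is to lift the obstruction class realising~$\m[n-1]$ to a non-trivial position-$1$ class in the Atiyah--Bredon complex for $Y(\bflambda)$.

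\textbf{Main obstacle.} The hardest step is this last lift: non-reflexivity does not transfer from a fixed-point subspace to the whole manifold in any automatic way, because enlarging~$\bflambda$ can in principle introduce additional free summands that absorb the obstruction. Thus one must compute the position-$1$ piece of the Atiyah--Bredon complex of $Y(\bflambda)$ directly, using the recursive fixed-set structure displayed above, and verify that a non-trivial class survives. This reduces non-reflexivity to a combinatorial problem on~$\bflambda$, solvable by induction on~$m$ but requiring careful bookkeeping of the Poincaré series of the relevant free summands.
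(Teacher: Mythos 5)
Part~(1) matches the paper: both invoke Lemma~\ref{thm:one-lambda} and the Betti-sum criterion for equivariant formality. For the torsion-freeness in Part~(2), your route through the Atiyah--Bredon complex and induction on $m$ is different from the paper's, and sketchier. The paper instead applies the quotient criterion of~\cite[Prop.~7.16]{Franz:orbits3}: the action on $Y(\bflambda)$ has connected isotropy, the non-free strata have codimension two, so $Y_{+}=Y(\bflambda)/T$ is a manifold with corners, and one only has to check injectivity of $H^{*}(P)\to\bigoplus H^{*}(Q)$ over facets $Q$ for each face $P$. The decisive computational input, which your outline does not identify, is that $Y_{+}$ and all faces $Y^{L}/M$ with $0\in\mathrm{conv}(\bflambda')$ are \emph{contractible} (the paper writes an explicit contraction to the point $(c,0,0)$), so the injectivity is trivial; for the remaining faces one uses freeness from Part~(1). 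Your plan of ``each summand is free or torsion-free, hence exactness at position $0$'' is not an argument: torsion-freeness of each $\HT^{*}(Y^{L})$ does not by itself yield the required injectivity, and you would still need to produce the geometric input that makes the restriction maps injective.

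For the non-reflexivity, there is a genuine gap, and you have correctly located it but not closed it. You try to \emph{push forward} the bad module~$\m$ from the fixed-point submanifold $Y^{L}\cong Y$ into $\HT^{*}(Y(\bflambda))$, and you observe (rightly) that no general mechanism guarantees this works. The missing idea is that the implication runs in the opposite direction: by~\cite[Cor.~2.2]{Franz:orbits3}, if $\HT^{*}(Y(\bflambda);\R)$ were reflexive over $A$, then for every subtorus $L\subset T$ with quotient $M=T/L$ the module $H_{M}^{*}(Y^{L};\R)$ would be reflexive over $H^{*}(BM;\R)$. Choosing $L$ so that the residual collection $\bflambda'$ has length $n+1$ with $0$ in the interior of its convex hull (possible by weak hyperbolicity and Carath\'eodory, as you note), one gets $Y^{L}\cong Y$ whose equivariant cohomology contains the summand $\m[n-1]$ with $\m^{\vee\vee}\cong A\ne\m$, hence is not reflexive. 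Contradiction. This restriction lemma makes the argument a one-line deduction rather than the open-ended lifting problem you describe.
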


To verify that \(\HT^{*}(Y)\) is torsion-free
we will use the quotient criterion of~\cite[Prop.~7.16]{Franz:orbits3}
(which in this case is essentially a reformulation of the equivariant injectivity criterion
of~\cite[Thm.~5.9]{GoertschesRollenske:2011}).
We can state it as follows: Let \(X\) be a compact manifold with an effective \(T\)-action having fixed points and
such that all isotropy groups are connected. Assume moreover that the non-free part
of the action is the union of codimension-\(2\) submanifolds, so that the quotient~\(X/T\)
is a manifold with corners, \cf~\cite[pp.~9 \&~12]{Franz:orbits3}.
Note that the minimal faces of~\(X/T\) correspond to the components of~\(X^{T}\).
For such an~\(X\) the equivariant cohomology \(\HT^{*}(X)\) is torsion-free over \(A=H^{*}(BT;\R)\)
if and only if for every face~\(P\) of~\(X/T\) not corresponding to a fixed point component
the restriction map
\begin{equation}
  \label{eq:quotient-criterion}
  H^{*}(P;\R) \to \bigoplus H^{*}(Q;\R)
\end{equation}
is injective, where \(Q\) runs through the facets of~\(P\).

\begin{proof}
  For convenience, we write \(Y=Y(\bflambda)\).
  Assume first that the convex hull of~\(\bflambda\) does not contain the origin.
  It then follows from Lemma~\ref{thm:one-lambda} that
  \(Y^{T} \cong S^{0}\times S^{n-1}\)
  has the same Betti sum as~\(Y\), which implies that
  \(\HT^{*}(Y;\R)\) is free over~\(A\), \cf~\cite[Thm.~3.10.4]{AlldayPuppe:1993}.

  Now assume that the origin is contained in the convex hull of~\(\bflambda\), say
  \begin{equation}
    \sum_{k=1}^{m} c_{k}\lambda_{k} = 0
    \qquad\text{and}\qquad
    \sum_{k=1}^{m} c_{k} = 1
  \end{equation}
  for some~\(c\in\Rge^{m}\).

  We start by observing that the orbit space~\(Y_{+}\)~itself is contractible:
  For a non-zero element~\(a\in\A\), let us write
  \(N(a) = a/|a|\) for its normalization, and also
  \(\sqrt{\ZZ}=(\sqrt{Z_{1}},\dots,\sqrt{Z_{m}})\) for~\(\ZZ\in\Rge^{m}\).
  Then the map~\(h\colon[0,1]\times Y_{+}\to Y_{+}\),
  \begin{equation}
    h(t,\ZZ,V,W) = N \bigl( \sqrt{t\ZZ+(1-t)c},\sqrt{t}\,V,\sqrt{t}\,W\bigr)
  \end{equation}
  is well-defined and contracts \(Y_{+}\) to~\((c,0,0)\).

  More generally, let \(P\) be any face of~\(Y_{+}\) not corresponding to a component of~\(Y^{T}\),
  and let \(L\subset T\) be the corresponding isotropy subgroup with quotient~\(M=T/L\).
  Then \(L\) is a coordinate subtorus
  of~\(T\), corresponding to some subset~\(K\subset\{1,\dots,m\}\), and \(Y^{L}\) is of the
  form~\(Y(\bflambda')\), where \(\bflambda'\) is formed by the~\(\lambda_{k}\) with~\(k\notin K\).

  Hence by the same token as above, the orbit space~\(P=Y^{L}/M\)
  is contractible as long as the origin is contained in the convex hull of~\(\bflambda'\),
  and \(H^{*}(P;\R)\) is acyclic in this case.
  Because \(P\) has at least one facet, the map~\eqref{eq:quotient-criterion} is injective.

  If the origin is not contained in~\(\bflambda'\), then we already know
  \(H_{M}^{*}(Y^{L})\) to be free over~\(H^{*}(BM;\R)\).
  Applying the quotient criterion to~\(Y^{L}\), we see that
  \eqref{eq:quotient-criterion} is injective for such a~\(P=Y^{L}/M\) as well.
  We conclude that \(\HT^{*}(Y;\R)\) is torsion-free.

  We finally show that \(\HT^{*}(Y;\R)\) is not reflexive.
  It it were, then so would be \(H_{M}^{*}(Y^{L};\R)\) over~\(H^{*}(BM;\R)\)
  for any subtorus~\(L\subset T\) with quotient~\(M=T/L\) \cite[Cor.~2.2]{Franz:orbits3}.
  In particular, it would be true for any~\(L\) such that \(\bflambda'\)
  (as defined above) has length~\(n+1\) and such that the corresponding convex hull
  contains the origin. But in this case \(Y^{L}\) is equivariantly diffeomorphic
  to the~\(Y\) defined by~\eqref{eq:def-Y-1} and~\eqref{eq:def-Y-2},
  and its equivariant cohomology~\eqref{eq:HTZ} is not reflexive
  (because % \footnote{The reason is that for~\(n>1\)
  the double dual of the \(A\)-module~\(\m\) is isomorphic to~\(A\), not to~\(\m\)).
  Contradiction.
\end{proof}

\begin{remark}
  More generally, for any~\(s\ge1\) one can consider the real algebraic variety~\(Y(\bflambda,s)\)
  given by
  \begin{align}
    \sum_{k=1}^{m}|Z_{k}|^{2}\lambda_{k} + \sum_{l=1}^{s} V_{l} W_{l} &= 0, \\
    \sum_{k=1}^{m}|Z_{k}|^{2} + \sum_{l=1}^{s} \bigl(|V_{l}|^{2} + |W_{l}|^{2}\bigr) &= 1,
  \end{align}
  where now \(V_{1}\),~\ldots,~\(V_{s}\),~\(W_{1}\),~\ldots,~\(W_{s}\in\A\).
  Again, \(Y(\bflambda,s)\) is smooth if \(\bflambda\) is weakly hyperbolic.
  
  For~\(n=2\)
  these manifolds were studied in~\cite{GomezLopezDeMedrano:2014},
  and the results there include the following:
  If \(m=0\), then \(Y(\bflambda,s)=V_{2s,2}\) is the Stiefel manifold of orthogonal \(2\)-frames
  in~\(\C^{s}=\R^{2s}\) (whose Betti sum equals \(4\)).
  Otherwise, if \(\bflambda\) does not contain the origin, then \(Y(\bflambda,s)\)
  is diffeomorphic to~\(S^{m+2s-2}\times S^{2s-1}\).
  If \(m=3\) and \(\bflambda\) does contain \(0\), then
  \begin{equation}
    Y(\bflambda,s) \cong
    \mathop\#_{3} S^{2s}\times S^{2s},
  \end{equation}
  which generalizes \eqref{eq:sum-real-case}.

  Still assuming \(n=2\),
  the conclusions of Proposition~\ref{thm:HT-Y-lambda}
  continue to hold for the equivariant cohomology of~\(Y(\bflambda,s)\),
  and the proof proceeds in the same way as before.
  For \(m=3\) and \(\bflambda\) containing the origin,
  we note that \(\HT^{*}(Y(\bflambda,s))\) cannot be reflexive
  because otherwise it would already be free over~\(A\)
  \cite[Cor.~1.4]{AlldayFranzPuppe:orbits1},
  which would contradict the fact that the Betti sums of~\(Y(\bflambda,s)\) and \(Y(\bflambda,s)^{T}=V_{2s,2}\)
  differ.
  
  For~\(n>2\) and \(s\ge2\) it is unclear
  whether the \(A\)-module~\(\HT^{*}(Y(\bflambda,s))\) can be reflexive
  without being free. If this is possible, then it happens already
  for the choice of~\(\bflambda\) given by~\eqref{eq:cond-lambda}.
\end{remark}

\end{document}